\DeclareMathOperator*{\slim}{s-lim}
\DeclareMathOperator{\supp}{supp}
\DeclareMathOperator{\arcsinh}{arcsinh}
\newtheorem{ass}{Assumption}[section]
\newtheorem{thm}[ass]{Theorem}
\newtheorem{prop}[ass]{Proposition}
\newtheorem{lem}[ass]{Lemma}
\newtheorem{rem}[ass]{Remark}
\begin{document}
\begin{flushleft}
{\Large \bf Quantum inverse scattering for time-dependent repulsive Hamiltonians of quadratic type}
\end{flushleft}

\begin{flushleft}
{\large Atsuhide ISHIDA}\\
{Katsushika Division, Institute of Arts and Sciences, Tokyo University of Science, 6-3-1 Niijuku, Katsushika-ku, Tokyo 125-8585, Japan\\ 
Email: aishida@rs.tus.ac.jp
}
\end{flushleft}

\begin{abstract}
We study a multidimensional inverse scattering problem under the time-dependent repulsive Hamiltonians of quadratic type. The time-dependent coefficient on the repulsive term decays as the inverse square of time, which is the threshold between the standard free Schr\"odinger operator and the time-independent repulsive Hamiltonians of quadratic type. Applying the Enss-Weder time-dependent method, we can determine uniquely the short-range potential functions with Coulomb-like singularities from the velocity limit of the scattering operator.
 \end{abstract}

\quad\textit{Keywords}: Scattering theory, wave operator, scattering operator\par
\quad\textit{MSC}2020: 35R30, 81Q10, 81U05, 81U40

\section{Introduction\label{introduction}}
Let us consider the quantum system governed by the following time-dependent repulsive Hamiltonian of quadratic type.
\begin{equation}
H_0(t)=p^2/2-k(t)x^2/2\label{free_hamiltonian}
\end{equation}
on $L^2(\mathbb{R}^n)$ with $n\geqslant2$, where $x=(x_1,\ldots,x_n)\in\mathbb{R}^n$ is the position of the particle, $p=-{\rm i}\nabla=-\sqrt{-1}(\partial_{x_1},\ldots,\partial_{x_n})$ with $\partial_{x_j}=\partial/\partial x_j$ for $1\leqslant j\leqslant n$ is the momentum, and the time-dependent coefficient is
\begin{equation}
k(t)=
\begin{cases}
\ \omega^2 & \quad \mbox{if}\quad|t|\leqslant1,\\
\ \sigma/t^2 & \quad \mbox{if}\quad|t|> 1\label{coeff}
\end{cases}
\end{equation}
with $\omega>0$ and $\sigma>0$. The total Hamiltonian $H(t)$ is given by the perturbation of the above $H_0(t)$ by the potential function $V$, that is
\begin{equation}
H(t)=H_0(t)+V
\end{equation}
with $V=V^{\rm sing}+V^{\rm reg}$, where $V^{\rm sing}$ and $V^{\rm reg}$ are defined in Assumption \ref{ass} below. We will use the following notation: $\|\cdot\|$ denotes the $L^2$-norm and operator norm on $L^2(\mathbb{R}^n)$, $(\cdot,\cdot)$ the scalar product of $L^2(\mathbb{R}^n)$, and $F(\cdots)$ the characteristic function of the set $\{\cdots\}$. The bracket $\langle\cdot\rangle$ has the standard definition $\langle x\rangle=\sqrt{1+x^2}$.The inequality $A\lesssim B$ means that there exists a constant $C>0$ such that $A\leqslant CB$.

\begin{ass}\label{ass}
Let $V$ be the multiplication operator as the sum of the real-valued measurable functions $V^{\rm sing}=V^{\rm sing}(x)$ and $V^{\rm reg}=V^{\rm reg}(x)$. The singular part $V^{\rm sing}$ is compactly supported in $\mathbb{R}^n$ and satisfies $V^{\rm sing}\in L^q(\mathbb{R}^n)$ for
\begin{equation}
q
\begin{cases}
\ =2\quad & \mbox{\rm if}\quad n\leqslant3,\\
\ >n/2\quad & \mbox{\rm if}\quad n\geqslant4.\label{sing}
\end{cases}
\end{equation}
The regular part $V^{\rm reg}$ satisfies $V^{\rm reg}\in C^1(\mathbb{R}^n)$ and
\begin{equation}
|\partial_x^\beta V^{\rm reg}(x)|\lesssim
\begin{cases}
\ \langle x\rangle^{-\rho-|\beta|/2}\quad & \mbox{\rm if}\quad\sigma\leqslant2,\\
\ \langle x\rangle^{-\rho-|\beta|}\quad & \mbox{\rm if}\quad\sigma>2\label{reg_decay}
\end{cases}
\end{equation}
for $1/\lambda<\rho<1$ with
\begin{equation}
\lambda=(1+\sqrt{1+4\sigma})/2>1\label{lambda}
\end{equation}
and the multi-indices $\beta\in(\mathbb{N}\cup\{0\})^n$ with $|\beta|\leqslant1$.
\end{ass}

Applying \cite[Theorem 6 and Remark (a)]{Ya}, there exist strong continuous propagators $U_0(t,s)$ and $U(t,s)$ for $H_0(t)$ and $H(t)$, respectively, under Assumption \ref{ass}. That is, $U(t,s)$ satisfies
\begin{itemize}
\item ${\rm i}\partial_tU(t,s)=H(t)U(t,s)$
\item ${\rm i}\partial_sU(t,s)=-U(t,s)H(s)$
\item $U(t,t)=1$
\end{itemize}
for all $t,s\in\mathbb{R}\setminus\{\pm1\}$, where $1$ above is the identity operator of $L^2(\mathbb{R}^n)$, and $U_0(t,s)$ also satisfies the same properties. Incidentally, we find that $x(t)=ct^\lambda$ with $c>0$ satisfies the Newton equation of the classical mechanics
\begin{equation}
({\rm d}^2/{\rm d}t^2)x(t)=k(t)x(t)\label{newton}
\end{equation}
for $t>1$ because $\lambda$ from $\eqref{lambda}$ is the one of the roots of the quadratic equation $\lambda(\lambda-1)=\sigma$. This implies that the critical value between the short- and long-range conditions of the potential function is $-1/\lambda$ and that $V^{\rm reg}$ is short-range. If the negative sign in front of $k(t)$ in \eqref{free_hamiltonian} is made positive, that is
\begin{equation}
\tilde{H}_0(t)=p^2/2+k(t)x^2/2,\label{time_dependent_harmonic}
\end{equation}
this system is called the time-dependent or time-decaying harmonic oscillator. Under this time-dependent quantum system, \cite[Theorem 1]{IsKa1} assumed that the potential function $V\in C(\mathbb{R}^n)$ satisfies
\begin{equation}
|V(x)|\lesssim\langle x\rangle^{-\gamma}
\end{equation}
with $\gamma>0$, and proved that the wave operators exist if $\gamma>1/(1-\mu)$ and do not exist if $\gamma\leqslant1/(1-\mu)$ where
\begin{equation}
\mu=(1-\sqrt{1-4\sigma})/2\label{mu}
\end{equation}
with $\sigma<1/4$. Replacing $1-\mu$ with $\lambda$ in the proof of \cite[Theorem 1]{IsKa1}, we can prove that the wave operators
\begin{equation}
W^{\pm}=\slim_{t\rightarrow\pm\infty}U(t,0)^*U_0(t,0)\label{wave_op}
\end{equation}
exist for $\gamma>1/\lambda$ and do not exist if $\gamma\leqslant1/\lambda$. In \cite[Theorem 1]{IsKa1}, the proof of the existence of the wave operators was given only for the short-range potential function $V\in C(\mathbb{R}^d)$. However, we can prove the existence of \eqref{wave_op} even for $V=V^{\rm sing}+V^{\rm reg}$ under Assumption \ref{ass}, using the Cook-Kuroda method \cite[Theorem XI. 4]{ReSi2} in the same way of \cite{IsKa1}. We therefore can define the scattering operator $S=S(V)$ by
\begin{equation}
S=(W^+)^*W^-.
\end{equation}
The main result in this paper is the following:

\begin{thm}\label{thm1}
Let $V_1$ and $V_2$ satisfy Assumption \ref{ass}. If $S(V_1)=S(V_2)$, then $V_1=V_2$ holds.
\end{thm}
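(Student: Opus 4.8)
The plan is to run the Enss--Weder time-dependent method: from a high-velocity limit of the scattering operator I would extract a \emph{linear} integral transform of $V$ which is injective, and since $S(V_1)=S(V_2)$ forces the two transforms to coincide, this gives $V_1=V_2$. First I would record the Duhamel identities $\partial_t[U(t,0)^*U_0(t,0)]=\mathrm{i}\,U(t,0)^*VU_0(t,0)$ and $U(t,s)-U_0(t,s)=\mathrm{i}\int_s^tU(t,\tau)VU_0(\tau,s)\,d\tau$, which together yield a representation of the form $S-1=\mathrm{i}\int_{-\infty}^{\infty}(W^+)^*U(0,t)\,V\,U_0(t,0)\,dt$. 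Replacing the full factor $(W^+)^*U(0,t)$ by the free one $U_0(0,t)$ isolates the first Born term $-\mathrm{i}\int_{-\infty}^{\infty}U_0(t,0)^*V U_0(t,0)\,dt$, and leaves a remainder gathering the higher Born terms and the wave-operator corrections $W^\pm-1$.

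Next I would introduce the high-velocity states $\Phi_v=e^{\mathrm{i}v\cdot x}\Phi$ and $\Psi_v=e^{\mathrm{i}(v+\eta)\cdot x}\Psi$ with $\Phi,\Psi\in\mathscr{S}(\mathbb{R}^n)$, a fixed relative momentum $\eta\neq0$, and let $|v|\to\infty$. The essential dynamical input is the metaplectic structure of $U_0(t,s)$: because the Heisenberg observables evolve by the classical symplectic flow of the Newton equation \eqref{newton}, with fundamental solutions $x_c(t),x_s(t)$ satisfying $x_s(t)\sim t^{\lambda}$ for large $t$ (with $\lambda$ as in \eqref{lambda}), the Mehler kernel of $U_0(t,0)$ shows that $U_0(t,0)\Phi_v$ concentrates along the classical trajectory $x_s(t)\,v$. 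I would then establish the two propagation estimates needed downstream: a dispersive $L^q\!\to\!L^{q'}$ bound for $U_0(t,s)$ matched to the exponents in \eqref{sing}, which controls the singular part $V^{\rm sing}$, and pointwise concentration along the trajectory, which controls the regular part $V^{\rm reg}$ through the decay \eqref{reg_decay}.

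The core of the argument is to show that the remainder vanishes as $|v|\to\infty$, so that only the first Born term survives, and then to evaluate its limit. In the surviving cross term $\int_{-\infty}^{\infty}(U_0(t,0)\Phi_v,\,V\,U_0(t,0)\Psi_v)\,dt$ the spatial integral reduces to a quantity of Fourier type $\widehat{V}(\eta)$ because of the oscillation $e^{\mathrm{i}\eta\cdot x}$; this converges for $\eta\neq0$ even when the decay rate $\rho<1$ is slow, since the oscillation regularizes. The time integral converges because, along the curved trajectory, $V$ is sampled at $|x|\sim t^{\lambda}$, so the integrand decays like $t^{-\lambda\rho}$ and $\int t^{-\lambda\rho}\,dt<\infty$ holds exactly under the short-range condition $\rho>1/\lambda$. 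The outcome is a reconstruction formula expressing a suitably $\lambda$-normalized high-velocity limit of $(\Phi_v,\mathrm{i}(S-1)\Psi_v)$ as a known weight times $\widehat{V}(\eta)$. Since this is valid for every $\eta\neq0$ and every direction, and since $S(V_1)=S(V_2)$ makes the reconstructed transforms agree, Fourier injectivity together with $V\to0$ at infinity gives $V_1=V_2$.

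I expect the main obstacle to lie in the decoupling estimates, i.e.\ in proving that the remainder truly vanishes uniformly in $|v|$ for the genuinely time-dependent, unbounded free flow combined with the Coulomb-type local singularity $V^{\rm sing}$. The growth $x_s(t)\sim t^{\lambda}$ makes the borderline regime $\rho$ close to $1/\lambda$ delicate, and one must exploit the $L^q$-hypothesis on $V^{\rm sing}$ (via the dispersive estimate, together with the fact that the fast trajectory meets the compact set $\supp V^{\rm sing}$ only for a time of order $|v|^{-1}$) and the derivative bound on $V^{\rm reg}$ (via one integration by parts in the oscillatory integral) to absorb all error contributions. Confirming that the first Born term dominates and computing its limit with the correct $\lambda$-dependent normalization is precisely where the repulsive quadratic structure enters in an essential way.
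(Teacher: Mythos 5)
Your overall framework (Enss--Weder, isolate the first Born term, let $|v|\to\infty$) is the right one, but the specific reconstruction you propose breaks down under the paper's hypotheses, and this is not a technicality --- it is the reason the paper is organized the way it is. With $1/\lambda<\rho<1$, the normalized Born term $|v|\int_{-\infty}^{\infty}(V^{\rm reg}\,U_0(t,0)\Phi_v,U_0(t,0)\Psi_v)\,{\rm d}t$ does not converge: on $|t|\leqslant1$ the free trajectory is linear, $x\sim \hat{v}\tau$ with $\tau=|v|t$, and the substitution produces $\int\langle\tau\rangle^{-\rho}\,{\rm d}\tau$, which diverges for $\rho\leqslant1$. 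Your claim that the oscillation $e^{{\rm i}\eta\cdot x}$ regularizes this is not correct --- that oscillation lives in the $x$-variable and does not help the line integral in $\tau$; and your observation that the integrand decays like $t^{-\lambda\rho}$ only applies to the region $|t|>1$, which is indeed harmless (this is \eqref{lem2_4}), but not to the near-zero region that carries the divergence. The paper's remedy is Weder's commutator trick: it computes $|v|({\rm i}[S_\lambda,p_j]\Phi_v,\Psi_v)$ rather than $|v|({\rm i}(S_\lambda-1)\Phi_v,\Psi_v)$, so that in the limit $V^{\rm reg}$ is replaced by $\partial_{x_j}V^{\rm reg}$, whose decay $\langle x\rangle^{-\rho-|\beta|/2}$ (or $-\rho-|\beta|$) makes the line integral converge, while the singular part appears through the commutator pairing $(V^{\rm sing}p_j\Phi_0,\Psi_0)-(V^{\rm sing}\Phi_0,p_j\Psi_0)$. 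The output is the X-ray transform \eqref{reconstructing_limit}, not $\widehat{V}(\eta)$, and uniqueness is recovered via the Plancherel formula for the Radon transform together with integrating the identity $\partial_{y_1}(V\Phi(y),\Psi(y))=-{\rm i}f(y)\equiv0$ out to infinity.

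Two further gaps. First, even with the commutator, the decoupling remainder is only $O(|v|^{1-2\rho})$, which vanishes precisely when $\rho>1/2$, i.e.\ $\sigma\leqslant2$; for $\sigma>2$ the paper must pass to scalar Graf/Zorbas-type modified wave operators $\Omega^{\pm}_{\lambda,v}$ with phase $e^{-{\rm i}\int_0^tV^{\rm reg}(\alpha(\tau)v)\,{\rm d}\tau}$, upgrading the propagation estimate for $V^{\rm reg}(x)-V^{\rm reg}(\alpha(t)v)$ to $O(|v|^{-1})$ (Lemmas \ref{lem4} and \ref{lem5}); your sketch has no mechanism for this regime, and the stronger derivative hypothesis in \eqref{reg_decay} for $\sigma>2$ exists exactly to make this work. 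Second, the paper never analyzes $U_0(t,0)$ directly: it substitutes the explicitly factorized $U_{0,\lambda}(t)$ of Lemma \ref{lem0} (Mehler formula for $|t|\leqslant1$, a dilation-conjugated free flow $e^{-{\rm i}t^{2\lambda-1}p^2/(2(2\lambda-1))}$ for $|t|>1$) and the corresponding $S_\lambda$, verifying that $S(V_1)=S(V_2)$ iff $S_\lambda(V_1)=S_\lambda(V_2)$. Your appeal to a generic ``metaplectic structure'' gestures at this but does not supply the concrete conjugation identities on which every estimate in the paper (Proposition \ref{enss} applied to $e^{-{\rm i}\tanh\omega t\,p^2/(2\omega)}$ and $e^{-{\rm i}t^{2\lambda-1}p^2/(2(2\lambda-1))}$) actually runs.
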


Cases where the coefficient $k(t)$ is time-independent are well-known. For example, if $k(t)$ is identically equal to zero, $p^2/2$ is the standard free Sch\"odinger operator, and there are many known results on scattering theory in this case. If $k(t)$ is an identically negative constant $-\omega^2$, then $p^2/2+\omega^2x^2/2$ is the harmonic oscillator. In the quantum system with this harmonic oscillator, it is well-known that all states are bound states and there are no scattering phenomena. If $k(t)$ is an identically positive constant $\omega^2$, then $p^2/2-\omega^2x^2/2$ is called the repulsive quadratic Hamiltonian or just the repulsive Hamiltonian. In this case, it is known that the classical motion of the particle has exponential behavior in $t$, and the corresponding scattering theory is discussed in \cite{BoCaHaMi,Is1,It1,It2}. For the time-dependent $k(t)=\sigma/t^\nu$, we are particularly interested in the case $\nu=2$ based on the following explanation. If $k(t)=\sigma/t^{\nu}$ with $\nu\not=2$ and $x(t)=ct^\lambda$ with $c>0$ and $\lambda>0$ for $t>1$, we have
\begin{equation}
\lambda(\lambda-1)=\sigma t^{2-\nu}\label{not=2} 
\end{equation}
from the Newton equation \eqref{newton}. Because equation \eqref{not=2} must hold even as $t\rightarrow\infty$, then we have $\lambda(\lambda-1)=0$ and $\lambda=1$ if $\nu>2$. In this case, the classical behavior is $x(t)=O(t)$ as $t\rightarrow\infty$, meaning that the system governed by $H_0(t)$ is asymptotically equivalent to the system governed by the free Schr\"odinger operator $p^2/2$. Whereas if $\nu<2$, we have $\lambda=\infty$ because $\lambda(\lambda-1)=\infty$ as $t\rightarrow\infty$ in \eqref{not=2}. This implies that the classical behavior exhibits exponential growth as $t\rightarrow\infty$ and that system is asymptotically governed by the repulsive quadratic Hamiltonian $p^2/2-\omega^2x^2/2$. These observations imply that the power $\nu=-2$ on $t$ is a threshold. Under the quantum system governed by the time-dependent harmonic oscillator \eqref{time_dependent_harmonic}, the power $\nu=-2$ of $t$ is also a threshold. There is much progress on this model recently in the linear and nonlinear scattering theory \cite{IsKa1,IsKa2,IsKa3,Ka1,Ka2,Ka3,KaMi1,KaMi2,KaMu,KaSa,KaYo}.

The Enss-Weder time-dependent method is originated in Enss and Weder \cite{EnWe} for the standard two- and $N$-body Sch\"odinger operators, and the uniqueness of the potential functions was proved. Since then, this method has been applied to various quantum systems, outer electric fields \cite{AdFuIs,AdKaKaTo,AdMa,AdTs1,AdTs2,Is3,Is6,Ni1,Ni2,VaWe,We}, time-independent repulsive Hamiltonian \cite{Is1,Is5,Ni3}, fractional Laplacian and Dirac equation \cite{Is4,Ju}, time-dependent harmonic oscillator \cite{Is7}, the nonlinear Schr\"odinger equation \cite{Wa1}, and Hartree-Fock equation \cite{Wa2,Wa3}.

This time-dependent repulsive quadratic Hamiltonian has never been investigated before this paper, although the time-dependent harmonic oscillator has been discussed by many authors. This time-dependent model does not relate to any concrete physics phenomena, however is an interesting model from a mathematical perspective. As we stated above, if $H_0(t)$ is time-independent, $H_0(t)=H_0\equiv p^2/2-\omega^2x^2/2$, and the behavior of the particle thus has the exponential growth as $t\rightarrow\infty$. The work in \cite{BoCaHaMi} focused on this behavior, proving that the wave operators are asymptotically complete if $V$ has the space decay $V(x)=O((\log|x|)^{-\gamma})$ with $\gamma>1$. It was later proved that the wave operators do not exist if $\gamma\leqslant1$ by \cite{Is2}. In the contrast, the choice of time-dependent coefficient $k(t)$ can lead to very different results. We can intuitively understand that $x^2$ and $t^{-2}$ balance each other, and this balance decreases the classical behavior to $O(t^\lambda)$ as $t\rightarrow\infty$.

The time evolution of $e^{-{\rm i}tH_0}$ can be written by as the composite mappings of the unitary operators as follows, called the Mehler formula, introduced by \cite{Ho}. It is well known that the Mehler formula for the harmonic oscillator involves trigonometric functions in the representation of the propagator. In contrast, for the repulsive version, the corresponding Mehler formula involves hyperbolic functions:
\begin{equation}
e^{-{\rm i}tH_0}=\mathscr{M}(\tanh\omega t/\omega)\mathscr{D}(\sinh\omega t/\omega)\mathscr{F}\mathscr{M}(\tanh\omega t/\omega)\label{mehler1}
\end{equation}
where $\mathscr{M}$ and $\mathscr{D}$ denote multiplication and dilation, with
\begin{gather}
\mathscr{M}(t)\phi(x)=e^{{\rm i}x^2/(2t)}\phi(x),\\
\mathscr{D}(t)\phi(x)=(2{\rm i}t)^{-n/2}\phi(x/t),
\end{gather}
and $\mathscr{F}$ denotes the Fourier transform over $L^2(\mathbb{R}^n)$. For simplicity, we use the notation $\sinh(\omega t)=\sinh\omega t$, and similarly for the other hyperbolic functions. We also have
\begin{gather}
\mathscr{D}(\sinh\omega t/\omega)={\rm i}^{n/2}\mathscr{D}(\cosh\omega t)\mathscr{D}(\tanh\omega t/\omega),\\
\mathscr{M}(\tanh\omega t/\omega)\mathscr{D}(\cosh\omega t)\mathscr{M}(-\tanh\omega t/\omega)=\mathscr{M}(\coth\omega t/\omega)\mathscr{D}(\cosh\omega t)
\end{gather}
and
\begin{equation}
e^{-{\rm i}tH_0}={\rm i}^{n/2}\mathscr{M}(\coth\omega t/\omega)\mathscr{D}(\cosh\omega t)e^{-{\rm i}\tanh\omega tp^2/(2\omega)}\label{mehler2}
\end{equation}
from the well-known formula for the standard free propagator
\begin{equation}
e^{-{\rm i}tp^2/2}=\mathscr{M}(t)\mathscr{D}(t)\mathscr{F}\mathscr{M}(t).
\end{equation}
We will frequently use \eqref{mehler2} instead of \eqref{mehler1}, which was first introduced in \cite{Is1} (see also \cite{Is5, Is7}). The time evolution $U_0(t,s)$ can be also decomposed to the unitary mappings:

\begin{lem}\label{lem0}
Let $U_{0,\lambda}(t)$ be defined by
\begin{equation}
U_{0,\lambda}(t)=e^{-{\rm i}(\lambda-1) x^2/(2t)}e^{{\rm i}(\lambda-1)\log tA}e^{-{\rm i}t^{2\lambda-1}p^2/(2(2\lambda-1))}\label{propagator_lambda}
\end{equation}
if $t>1$ and
\begin{equation}
U_{0,\lambda}(t)=e^{-{\rm i}(\lambda-1) x^2/(2t)}e^{{\rm i}(\lambda-1)\log(-t)A}e^{-{\rm i}(-t)^{2\lambda-1}p^2/(2(2\lambda-1))}
\end{equation}
if $t<-1$ where $A=(p\cdot x+x\cdot p)/2$. Then
\begin{equation}
U_0(s,t)=U_{0,\lambda}(t)U_{0,\lambda}(s)^*.\label{unitary_decomposition}
\end{equation}
holds for $t,s>1$ or $t,s<-1$.
\end{lem}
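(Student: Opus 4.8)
The plan is to reduce the lemma to a single differential identity: I will show that the family $U_{0,\lambda}(t)$ in \eqref{propagator_lambda} is itself a solution of the Schr\"odinger equation generated by $H_0(t)$,
\begin{equation}
{\rm i}\partial_tU_{0,\lambda}(t)=H_0(t)U_{0,\lambda}(t),\qquad t>1.\label{ode_sketch}
\end{equation}
Granting \eqref{ode_sketch}, both $t\mapsto U_{0,\lambda}(t)$ and $t\mapsto U_0(t,s)U_{0,\lambda}(s)$ solve the same equation and agree at $t=s$, so by the uniqueness of the propagator from \cite[Theorem 6]{Ya} they coincide. Reading off the resulting product formula yields $U_0(t,s)=U_{0,\lambda}(t)U_{0,\lambda}(s)^*$, which is the decomposition \eqref{unitary_decomposition}. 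All manipulations are first performed on the Schwartz class $\mathscr{S}(\mathbb{R}^n)$, which is left invariant by each of the three factors in \eqref{propagator_lambda}, and then extended by density.

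To prove \eqref{ode_sketch} I would compute $({\rm i}\partial_tU_{0,\lambda})U_{0,\lambda}^*$ by the product rule applied to the three exponential factors $M(t)=e^{-{\rm i}(\lambda-1)x^2/(2t)}$, $D(t)=e^{{\rm i}(\lambda-1)(\log t)A}$ and $P(t)=e^{-{\rm i}t^{2\lambda-1}p^2/(2(2\lambda-1))}$, propagating the tail $P(t)^*D(t)^*M(t)^*$ back through the derivative of each factor so that the far factors cancel in pairs. The only inputs are the two standard conjugation identities
\begin{gather}
e^{{\rm i}\tau A}pe^{-{\rm i}\tau A}=e^{-\tau}p,\qquad\tau=(\lambda-1)\log t,\\
M(t)pM(t)^*=p+(\lambda-1)x/t,
\end{gather}
the first expressing that $A$ generates dilations and the second that $M(t)$ is a gauge (momentum-shift) transformation. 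From these one obtains $D(t)p^2D(t)^*=t^{-2(\lambda-1)}p^2$, $M(t)p^2M(t)^*=p^2+2(\lambda-1)t^{-1}A+(\lambda-1)^2t^{-2}x^2$, and $M(t)AM(t)^*=A+(\lambda-1)t^{-1}x^2$.

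Carrying out the differentiation then yields exactly three contributions,
\begin{gather}
({\rm i}\partial_tM)M^*=-\frac{(\lambda-1)x^2}{2t^2},\\
M({\rm i}\partial_tD)D^*M^*=-\frac{\lambda-1}{t}M(t)AM(t)^*=-\frac{(\lambda-1)A}{t}-\frac{(\lambda-1)^2x^2}{t^2},\\
MD({\rm i}\partial_tP)P^*D^*M^*=\tfrac12M(t)p^2M(t)^*=\frac{p^2}{2}+\frac{(\lambda-1)A}{t}+\frac{(\lambda-1)^2x^2}{2t^2},
\end{gather}
where the exponent $t^{2\lambda-1}$ in $P(t)$ is tuned precisely so that the rescaling $D(t)p^2D(t)^*=t^{-2(\lambda-1)}p^2$ cancels the prefactor $t^{2\lambda-2}$ produced by $\partial_tP$. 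Adding the three lines, the dilation terms $\mp(\lambda-1)A/t$ cancel and the quadratic terms combine into $-\tfrac12(\lambda-1)\lambda\,x^2/t^2$; since $\lambda$ solves $\lambda(\lambda-1)=\sigma$ by \eqref{lambda}, this equals $-\sigma x^2/(2t^2)=-k(t)x^2/2$, so the total is $p^2/2-k(t)x^2/2=H_0(t)$, which is \eqref{ode_sketch}. The region $t,s<-1$ is handled identically after replacing $t$ by $-t$ in the logarithm and the power, as in the second display of the lemma.

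The routine parts are the two conjugation identities and the product-rule bookkeeping. The step I expect to carry the real content is the algebraic collapse: that, after accounting for the noncommuting conjugations, the cross term $A$ disappears and the coefficient of $x^2/t^2$ lands on exactly $-(\lambda-1)\lambda/2=-\sigma/2$. This is where the specific choice of the three exponents in \eqref{propagator_lambda}---and in particular the defining relation \eqref{lambda} for $\lambda$ obtained from the Newton equation \eqref{newton}---is used, and it is the only place the value of $\sigma$ enters.
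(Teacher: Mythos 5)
Your proposal is correct and follows essentially the same route as the paper: the paper's proof likewise reduces the lemma to the identity ${\rm i}\partial_tU_{0,\lambda}(t)=H_0(t)U_{0,\lambda}(t)$ for $|t|>1$ (together with the adjoint relation in $s$) and concludes by uniqueness of the propagator, except that it delegates the actual differentiation to the proof of \cite[Proposition 1]{IsKa1} with $1-\mu$ replaced by $\lambda$, whereas you carry the computation out explicitly --- and your algebra checks out, including the cancellation of the $\pm(\lambda-1)A/t$ terms, the exact cancellation of the $t^{2\lambda-2}$ prefactor by $D(t)p^2D(t)^*=t^{-2(\lambda-1)}p^2$, and the identification of the $x^2/t^2$ coefficient with $-\sigma/2$ via $\lambda(\lambda-1)=\sigma$. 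The only discrepancy is notational: your argument produces $U_0(t,s)=U_{0,\lambda}(t)U_{0,\lambda}(s)^*$, which is the form consistent with ${\rm i}\partial_tU_0(t,s)=H_0(t)U_0(t,s)$ and $U_0(t,t)=1$, while the lemma's display writes the left-hand side as $U_0(s,t)$.
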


\begin{proof}[Proof of Lemma \ref{lem0}]
It was proved by \cite[Proposition 1]{IsKa1} that the propagator $\tilde{U}_0(t,s)$ generated by $\tilde{H}_0(t)$ can be written as
\begin{equation}
\tilde{U}_0(t,s)=U_{0,1-\mu}(t)U_{0,1-\mu}(s)^*
\end{equation}
for $t,s>1$ or $t,s<-1$. Replacing $1-\mu$ with $\lambda$ in the proof of \cite[Proposition 1]{IsKa1}, we have ${\rm i}\partial_tU_{0,\lambda}(t)=H_0(t)U_{0,\lambda}(t)$ for $|t|>1$ and ${\rm i}\partial_sU_{0,\lambda}(s)^*=-U_{0,\lambda}(s)^*H_0(s)$ for $|s|>1$. The uniqueness of the propagators implies that \eqref{unitary_decomposition} holds for $t,s>1$ or $t,s<-1$.
\end{proof}

Moreover, we define
\begin{equation}
U_{0,\lambda}(t)=e^{-{\rm i}tH_0}
\end{equation}
for $|t|\leqslant1$. Because the wave operators \eqref{wave_op} exist, the strong limits
\begin{equation}
W_\lambda^\pm=\slim_{t\rightarrow\pm\infty}U(t,0)^*U_{0,\lambda}(t)
\end{equation}
also exist and we define $S_\lambda=S_\lambda(V)$ by
\begin{equation}
S_\lambda=(W_\lambda^+)^*W_\lambda^-.
\end{equation}
Noting that $W^\pm=W_\lambda^\pm U_{0,\lambda}(s_\pm)^*U_0(s_\pm,0)$ for $s_+>1$ and $s_-<-1$, we have the following relation between $S$ and $S_\lambda$:
\begin{equation}
S=U_0(s_+,0)^*U_{0,\lambda}(s_+)S_\lambda U_{0,\lambda}(s_-)^*U_0(s_-,0),
\end{equation}
and we find that $S(V_1)=S(V_2)$ is equivalent to $S_\lambda(V_1)=S_\lambda(V_2)$. The idea to use $W_\lambda^\pm$ and $S_\lambda$ instead of $W^\pm$ and $S$ was first introduced by \cite{Is7} for the inverse problem under the time-dependent harmonic oscillator. Investigating the time evolution $U_0(t,0)$ directly is thought to be difficult. However, the decomposition using $U_{0,\lambda}(t)$ reduces it to the time evolution $e^{-{\rm i}tH_0}$ if $|t|\leqslant1$ and $e^{\mp{\rm i}|t|^{2\lambda-1}p^2/(2(2\lambda-1))}$ if $|t|>1$.

Our proof considers two cases: $\sigma\leqslant2$ and $\sigma>2$. The exponent $\rho>1/2$ holds in \eqref{reg_decay} if $\sigma\leqslant2$, and this decay condition is same as for Stark short-range. Alternatively, $\rho\leqslant1/2$ if $\sigma>2$, which is Stark long-range (see \cite{JeOz, Oz}). If we directly apply the proof for the case $\sigma\leqslant2$ to the case $\sigma>2$, we do not obtain sufficient decay of $|v|$ as required in \eqref{thm2_1}. Therefore, we define scalar-valued modified wave operators in the manner of the Graf- and Zorbas-types \cite{Gr, Zo} for $\sigma>2$ to restore this weak decay to \eqref{thm2_9}. Graf-type (or Zorbas-type) modified wave operators were first introduced in the inverse problem by \cite{AdMa} even for Stark short-range potentials. Since then, this idea has been adopted by many studies on inverse problems \cite{AdFuIs,AdKaKaTo,AdTs1,AdTs2,Is1,Is3,Is5,VaWe}. Moreover, we need the stronger decay condition for the first derivative in \eqref{reg_decay} for the case $\sigma>2$ because integrable decay is required regarding $t$ in the reconstructing limit \eqref{reconstructing_limit} (see also \eqref{thm2_5}, \eqref{thm2_7}, \eqref{thm2_10} and \eqref{thm2_11}).

\section{Reconstructing Limit}
The following result is essential for the proof of Theorem \ref{thm1}. We therefore devote ourselves to proving this limit, and give the complete proof of Theorem \ref{thm1} at the end of this section.

By proving the following reconstructing limit, the proof of Theorem \ref{thm1} is demonstrated as in \cite[Theorem 1.2]{We} (see also \cite{EnWe}) by the Plancherel formula of the Radon transform \cite[Theorem 2.17 in Chapter 1]{He}. The commutator of the scattering operator with $p_j$ was first introduced to the reconstructing limit by \cite{We} for the Stark effect. This idea was also adopted in \cite{AdFuIs,AdKaKaTo,AdTs1,AdTs2, Is1, Is3, Is5, Ni1, Ni2, Ni3, VaWe}.

\begin{thm}\label{thm2}
Let $\Phi_0\in\mathscr{S}(\mathbb{R}^n)$ be such that $\mathscr{F}\Phi_0\in C_0^\infty(\mathbb{R}^n)$. For $v\in\mathbb{R}^n$, its normalization is $\hat{v}=v/|v|$. Let $\Phi_v=e^{{\rm i}v\cdot x}\Phi_0$ and $\Psi_v$ have the same properties. Then
\begin{gather}
\lim_{|v|\rightarrow\infty}|v|({\rm i}[S_\lambda,p_j]\Phi_v,\Psi_v)\nonumber\\
=\int_{-\infty}^\infty\left\{\right.(V^{\rm sing}(x+\hat{v}t)p_j\Phi_0,\Psi_0)-(V^{\rm sing}(x+\hat{v}t)\Phi_0,p_j\Psi_0)\nonumber\\
+({\rm i}(\partial_{x_j}V^{\rm reg})(x+\hat{v}t)\Phi_0,\Psi_0)\left.\right\}{\rm d}t\label{reconstructing_limit}
\end{gather}
holds for $1\leqslant j\leqslant n$ where $p_j$ is the $j$th component of $p$.
\end{thm}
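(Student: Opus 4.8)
The plan is to expand $S_\lambda-1$ by a Cook--Duhamel argument, isolate the first Born term as the source of the right-hand side of \eqref{reconstructing_limit}, and show that every remaining contribution is $o(|v|^{-1})$ as $|v|\to\infty$. Differentiating $U(\tau,0)^*U_{0,\lambda}(\tau)$ and using ${\rm i}\partial_\tau U_{0,\lambda}(\tau)=H_0(\tau)U_{0,\lambda}(\tau)$ (Lemma \ref{lem0}) together with the propagator equations for $U$ gives $W_\lambda^--W_\lambda^+=-{\rm i}\int_{-\infty}^\infty U(\tau,0)^*VU_{0,\lambda}(\tau)\,{\rm d}\tau$, whence
\[
S_\lambda-1=-{\rm i}\int_{-\infty}^\infty (W_\lambda^+)^*U(\tau,0)^*\,V\,U_{0,\lambda}(\tau)\,{\rm d}\tau.
\]
Expanding the prefactor once more,
\[
(W_\lambda^+)^*U(\tau,0)^*=U_{0,\lambda}(\tau)^*-{\rm i}\int_\tau^{+\infty}U_{0,\lambda}(r)^*\,V\,U(r,\tau)\,{\rm d}r,
\]
splits $S_\lambda-1$ into the first Born term $-{\rm i}\int U_{0,\lambda}(\tau)^*VU_{0,\lambda}(\tau)\,{\rm d}\tau$ and a remainder $R$ that is a double time integral. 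A preliminary but decisive remark: since $p_j\Phi_v=e^{{\rm i}v\cdot x}(p_j+v_j)\Phi_0$ and likewise for $\Psi_v$, in $({\rm i}[S_\lambda,p_j]\Phi_v,\Psi_v)={\rm i}((S_\lambda-1)p_j\Phi_v,\Psi_v)-{\rm i}((S_\lambda-1)\Phi_v,p_j\Psi_v)$ the two contributions carrying the factor $v_j$ cancel because $v_j$ is real. This is exactly why commuting with $p_j$ removes the leading boost growth and $|v|$ is the correct normalization; it also means the remainder need only be estimated with $p_j$ acting on the fixed $\Phi_0,\Psi_0$, with no spurious factor of $|v|$.

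For the first Born term I would use the explicit structure of $U_{0,\lambda}(\tau)$. Because $V^{\rm sing}$ is compactly supported and $V^{\rm reg}$ decays, and because $\mathscr{F}\Phi_0\in C_0^\infty$ makes $p$ act boundedly so that $p+v\neq0$ for large $|v|$, the argument of $V$ is confined (or forced to decay), which makes the effective integration concentrate near $\tau=0$; there $U_{0,\lambda}(\tau)=e^{-{\rm i}\tau H_0}$ with $H_0=p^2/2-\omega^2x^2/2$. The Heisenberg flow of $x$ for this repulsive oscillator, equivalently the Mehler formula \eqref{mehler2}, gives $e^{{\rm i}\tau H_0}V(x)e^{-{\rm i}\tau H_0}=V(\cosh\omega\tau\,x+\omega^{-1}\sinh\omega\tau\,p)$, and conjugation by $e^{{\rm i}v\cdot x}$ turns $p$ into $p+v$. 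I then treat the two parts of $V$ differently: $V^{\rm sing}$ is kept undifferentiated, with $p_j$ left on $\Phi_0,\Psi_0$, while for $V^{\rm reg}$ I carry out the commutator, using $[V^{\rm reg}(\cosh\omega\tau\,x+\cdots),p_j]={\rm i}\cosh\omega\tau\,(\partial_{x_j}V^{\rm reg})(\cosh\omega\tau\,x+\cdots)$. The substitution $t=|v|\omega^{-1}\sinh\omega\tau$ converts $|v|\,{\rm d}\tau$ into $(\cosh\omega\tau)^{-1}{\rm d}t$, so the $\cosh\omega\tau$ produced by the regular commutator cancels, the leading argument becomes $\hat v\,t$, and the subleading pieces $\cosh\omega\tau\,x\to x$ and $\omega^{-1}\sinh\omega\tau\,p=(t/|v|)p\to0$. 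Dominated convergence --- with the compact support of $V^{\rm sing}$ and the short-range bound \eqref{reg_decay} furnishing an integrable majorant, and with the $|\tau|>1$ tail vanishing because its argument grows like $\tau^\lambda|v|$ and leaves the support (respectively the decay region) of $V$ --- produces precisely the three terms on the right of \eqref{reconstructing_limit}; note that $\lambda$ disappears from the limit since only the fixed dynamics $e^{-{\rm i}\tau H_0}$ near $\tau=0$ survives.

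The main obstacle is to prove $|v|({\rm i}[R,p_j]\Phi_v,\Psi_v)\to0$. Here the Enss--Weder propagation estimates are essential: I would dominate the remainder by integrals of the type $\int_{-\infty}^\infty\|V\,U_{0,\lambda}(\tau)\Phi_v\|\,{\rm d}\tau$ (and the analogue with $\Psi_v$), absorbing the intervening $VU(r,\tau)$ through the relative boundedness of $V$ with respect to $H_0(\cdot)$. The decay in $|v|$, integrable in $\tau$, is read off from the dispersive ($L^1\to L^\infty$) behaviour of $e^{-{\rm i}\tau H_0}$ in \eqref{mehler2}; the regular part is controlled by \eqref{reg_decay}, and the singular part by interpolation using the $L^{q'}\to L^q$ smoothing near $\tau=0$, the exponents in \eqref{sing} being precisely what renders these norms integrable and the remainder $o(|v|^{-1})$. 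The delicate regime is $\sigma>2$, where $\rho$ may be $\le1/2$ (the Stark long-range range) and the naive bound fails to be integrable in $\tau$. There I would replace $W_\lambda^\pm$ by Graf--Zorbas-type scalar modified wave operators \cite{Gr,Zo,AdMa}: being scalar, their phases commute with $p_j$ and therefore change neither $[S_\lambda,p_j]$ nor the first Born term, while they restore the missing integrability; simultaneously the strengthened derivative bound $|\partial_x^\beta V^{\rm reg}|\lesssim\langle x\rangle^{-\rho-|\beta|}$ in \eqref{reg_decay} guarantees that $\partial_{x_j}V^{\rm reg}$ is integrable along the trajectory $x+\hat v t$, so that the limiting $t$-integral in \eqref{reconstructing_limit} converges.
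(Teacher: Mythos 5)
Your skeleton coincides with the paper's proof: the Duhamel expansion of $S_\lambda-1$, the cancellation of the boost via $[S_\lambda,p_j]=[S_\lambda-1,p_j-v_j]$, the change of variables $t=|v|\sinh\omega\tau/\omega$ (more generally $t=|v|\alpha(\tau)$ with $\alpha$ as in \eqref{alpha}) together with dominated convergence for the first Born term, the reduction of the remainder to products of integrals of the form $\int\|V\,U_{0,\lambda}(\tau)\Phi_v\|\,{\rm d}\tau$, and the scalar Graf--Zorbas modification for $\sigma>2$ are exactly the steps of Section~2 (the only harmless bookkeeping difference is that the paper writes ${\rm i}(S_\lambda-1)=\int U_{0,\lambda}(t)^*VU(t,0)W_\lambda^-\,{\rm d}t$ and expands $U(t,0)W_\lambda^-$ around $U_{0,\lambda}(t)$, whereas you expand $(W_\lambda^+)^*U(\tau,0)^*$).

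The one step that would fail as written is your claim that the decay in $|v|$ of $\int\|V\,U_{0,\lambda}(\tau)\Phi_v\|\,{\rm d}\tau$ is ``read off from the dispersive ($L^1\to L^\infty$) behaviour'' of the propagator, with the singular part handled ``by interpolation using the $L^{q'}\to L^q$ smoothing near $\tau=0$.'' Every $L^p$ norm of $\Phi_v=e^{{\rm i}v\cdot x}\Phi_0$ equals that of $\Phi_0$, so dispersive bounds are blind to the boost: they can only yield decay in $\tau$, never the essential factor $|v|^{-1}$. In the paper the $|v|$-decay comes from a different mechanism (Lemma \ref{lem1}): conjugation by $e^{{\rm i}v\cdot x}$ shifts the argument of $V$ to $\cosh\omega\tau\,x+\sinh\omega\tau\,v/\omega$ (resp.\ $t^{1-\lambda}x+t^\lambda v/(2\lambda-1)$), one splits according to whether $|x|$ is small or large compared with the classical displacement $|\alpha(\tau)||v|$, the small-$|x|$ region is controlled by the compact support of $V^{\rm sing}$ (resp.\ the decay \eqref{reg_decay}) evaluated along the boosted trajectory, and the large-$|x|$ region is controlled by the Enss propagation estimate (Proposition \ref{enss}) applied to the free factor of \eqref{mehler2} after inserting $\langle p\rangle^{-2}$ and the energy cutoff $f(p)$. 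The exponents in \eqref{sing} enter only through the boundedness of $V^{\rm sing}\langle p\rangle^{-2}$, not through any $L^{q'}\to L^q$ smoothing. Since you do name the Enss--Weder propagation estimates as essential, the repair is simply to carry out that spatial splitting explicitly rather than appeal to dispersion; with that replacement the remainder of your argument, including the $\sigma>2$ case, goes through as in the paper.
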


The following proposition, proved by \cite[Proposition 2.10]{En}, is useful for the proofs of some estimates used in turn to prove Theorem \ref{thm2}:

\begin{prop}\label{enss}
Let $M$ and $M'$ be measurable subsets of $\mathbb{R}^n$ and let $f\in C_0^\infty(\mathbb{R}^n)$ have $\supp f\subset\{\xi\in\mathbb{R}^n\bigm||\xi|\leqslant\eta\}$ for some $\eta>0$. Then
\begin{equation}
\|F(x\in M')e^{-{\rm i}tp^2/2}f(p)F(x\in M)\|\lesssim_{N,f}(1+|t|+r)^{-N}
\end{equation}
for $t\in\mathbb{R}$ and $N\in\mathbb{N}$, where $r={\rm dist}(M',M)-\eta|t|\mathbb{R}^n\geqslant0$ and $\lesssim_{N,f}$ means that the constant depends on $N$ and $f$.
\end{prop}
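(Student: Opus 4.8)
\section*{Proof proposal for Proposition \ref{enss}}

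The plan is to pass to the integral kernel of $e^{-{\rm i}tp^2/2}f(p)$, prove a pointwise non-stationary phase bound on it, and then convert that bound into an operator-norm bound by Schur's test. Since $\mathscr{F}(e^{-{\rm i}tp^2/2}f(p)\phi)(\xi)=e^{-{\rm i}t|\xi|^2/2}f(\xi)(\mathscr{F}\phi)(\xi)$, the operator $e^{-{\rm i}tp^2/2}f(p)$ is convolution by
\[
K_t(z)=(2\pi)^{-n}\int_{\mathbb{R}^n}e^{{\rm i}(z\cdot\xi-t|\xi|^2/2)}f(\xi)\,{\rm d}\xi,
\]
so that $F(x\in M')e^{-{\rm i}tp^2/2}f(p)F(x\in M)$ has kernel $F(x\in M')K_t(x-y)F(y\in M)$. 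By Schur's test its norm is dominated by $\sup_x\int_{y\in M}|K_t(x-y)|\,{\rm d}y$ and the analogous supremum over $y$, each of which is at most $\int_{|z|\geqslant d}|K_t(z)|\,{\rm d}z$ with $d={\rm dist}(M',M)$, because $x\in M'$ and $y\in M$ force $|x-y|\geqslant d$. At the outset I would record the elementary equivalence $1+|t|+r\sim 1+d$ (valid since $d=r+\eta|t|$ with $r,|t|\geqslant 0$, the implied constants depending only on $\eta$), so that the target reduces to a bound of order $(1+d)^{-N}$; moreover $d\geqslant\eta|t|$ forces $|t|\leqslant|z|/\eta$ on the entire region of integration.

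The heart of the matter is the pointwise estimate $|K_t(z)|\lesssim_{N,f}(1+|z|)^{-N}$ whenever $|z|\geqslant\eta|t|$. Writing $\phi(\xi)=z\cdot\xi-t|\xi|^2/2$, so that $\nabla_\xi\phi=z-t\xi$ and ${\rm Hess}_\xi\phi=-tI$, I would split the region $|z|\geqslant\eta|t|$ into $|z|\geqslant2\eta|t|$ and $\eta|t|\leqslant|z|<2\eta|t|$. In the first (far) region, $|\nabla_\xi\phi|\geqslant|z|-\eta|t|\geqslant|z|/2$ on $\supp f$, and since $|t|\leqslant|z|/(2\eta)$ one has $|t|\lesssim|\nabla_\xi\phi|$; repeated integration by parts against the operator $L={\rm i}^{-1}|\nabla_\xi\phi|^{-2}\nabla_\xi\phi\cdot\nabla_\xi$, which fixes $e^{{\rm i}\phi}$, then produces, via $\Delta_\xi\phi=-nt$ and $\nabla_\xi|\nabla_\xi\phi|^2=-2t\nabla_\xi\phi$, a gain of $|\nabla_\xi\phi|^{-1}\lesssim|z|^{-1}$ at each step (the $|t|$-factors from the curvature being absorbed by $|t|\lesssim|\nabla_\xi\phi|$), giving the desired $(1+|z|)^{-N}$.

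The main obstacle is the second (near-stationary) region $\eta|t|\leqslant|z|<2\eta|t|$, where the classical stationary point $\xi_*=z/t$ has $|\xi_*|=|z|/|t|\geqslant\eta$ and thus sits on or just beyond the boundary of $\supp f\subset\{|\xi|\leqslant\eta\}$; here $|\nabla_\xi\phi|=|t|\,|\xi-\xi_*|$ can be arbitrarily small on $\supp f$, so naive non-stationary phase degrades and one must combine the non-degenerate curvature with the vanishing of $f$. I would resolve this by a $|t|$-dependent cut at scale $\delta=|t|^{-1/3}$ about $\xi_*$, say $\psi_{\rm near}$ supported in $|\xi-\xi_*|<2\delta$ with $|\partial^\alpha\psi_{\rm near}|\lesssim\delta^{-|\alpha|}$ and $\psi_{\rm far}=1-\psi_{\rm near}$. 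On the far piece, $|\xi-\xi_*|\geqslant\delta$ gives $|\nabla_\xi\phi|\geqslant|t|\delta$, and each integration by parts (counting also the cutoff derivatives) gains $(|t|\delta^2)^{-1}=|t|^{-1/3}$, hence rapid decay; on the near piece one has $\eta-\delta<|\xi|\leqslant\eta$ on $\supp f$, and since $f\in C_0^\infty$ vanishes to infinite order at $|\xi|=\eta$ one has $|f(\xi)|\lesssim_M(\eta-|\xi|)^M\lesssim_M\delta^M$, so that piece contributes $\lesssim_M\delta^{M+n}$. Both are $O(|t|^{-K})$ for every $K$, and $|z|\sim|t|$ in this region converts this to $(1+|z|)^{-K}$. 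Finally, inserting the pointwise bound into Schur's test and integrating, $\int_{|z|\geqslant d}(1+|z|)^{-N}\,{\rm d}z\lesssim_N(1+d)^{-(N-n)}\sim(1+|t|+r)^{-(N-n)}$, and since $N$ is arbitrary this yields the proposition.
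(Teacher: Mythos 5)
Your proposal is correct, but note first that the paper itself contains no proof of this proposition: it is quoted verbatim from Enss \cite{En} (Proposition 2.10), complete with the typographical slip in the statement, where $r={\rm dist}(M',M)-\eta|t|\mathbb{R}^n\geqslant0$ should read $r={\rm dist}(M',M)-\eta|t|\geqslant0$, as you correctly interpreted. So what you have written is a self-contained reconstruction of the classical propagation estimate rather than an alternative to an in-paper argument. Your route --- pass to the convolution kernel $K_t$, reduce via Schur's test to $\int_{|z|\geqslant d}|K_t(z)|\,{\rm d}z$ with $d=r+\eta|t|$, and prove $|K_t(z)|\lesssim_{N,f}(1+|z|)^{-N}$ on $|z|\geqslant\eta|t|$ --- is the standard one, and you correctly isolate the only genuinely delicate point: when $r=0$ and $|z|\approx\eta|t|$, the stationary point $\xi_*=z/t$ sits on the sphere $|\xi|=\eta$ bounding $\supp f$, where naive non-stationary phase yields decay in $r$ but none in $|t|$, since $|\nabla_\xi\phi|$ can be as small as $r$ on $\supp f$. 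Your resolution is exactly what makes the proposition true with the \emph{same} $\eta$ in hypothesis and conclusion: the cutoff at scale $\delta=|t|^{-1/3}$, the gain $(|t|\delta^2)^{-1}=|t|^{-1/3}$ per integration by parts on the far piece (your bookkeeping of the cutoff derivatives $\delta^{-1}$ against $|\nabla_\xi\phi|\geqslant|t|\delta$, and of the curvature factors $|t|/|\nabla_\xi\phi|^2$, checks out), and on the near piece the observation that $f\in C_0^\infty(\mathbb{R}^n)$ with $\supp f\subset\{|\xi|\leqslant\eta\}$ automatically vanishes to infinite order on the sphere $|\xi|=\eta$ (all derivatives vanish there as limits from the exterior), so Taylor expansion at the nearest boundary point gives $|f(\xi)|\lesssim_M(\eta-|\xi|)^M\lesssim_M\delta^M$ and a contribution $\lesssim_M\delta^{M+n}$. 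The far region $|z|\geqslant2\eta|t|$, where $|t|\lesssim|\nabla_\xi\phi|$ absorbs the Hessian factors, and the final conversion $1+d\sim_\eta1+|t|+r$ are also correct.

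Two small repairs for a final write-up. First, the bounds $O(|t|^{-K})$ in the near-stationary region degenerate as $|t|\rightarrow0$, so you should dispatch $|t|\leqslant1$ (where $|z|<2\eta|t|\leqslant2\eta$ and $(1+|z|)^{-N}\sim1$) by the trivial bound $|K_t(z)|\leqslant(2\pi)^{-n}\|f\|_{L^1}$ before running the $\delta=|t|^{-1/3}$ argument for $|t|\geqslant1$; the same remark covers small $|z|$ in the far region. Second, on the near piece one has $\eta-|\xi|<2\delta$ rather than $\delta$ (harmless), and for each fixed $N$ your constant involves only finitely many seminorms of $f$, consistent with the meaning of $\lesssim_{N,f}$ in the statement.
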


We first prove the following propagation estimate for $V^{\rm sing}$:

\begin{lem}\label{lem1}
Let $\Phi_v$ be as in Theorem \ref{thm2}. Then
\begin{equation}
\int_{-\infty}^\infty\|V^{\rm sing}(x)U_{0,\lambda}(t)\Phi_v\|{\rm d}t=O(|v|^{-1})
\end{equation}
holds as $|v|\rightarrow\infty$.
\end{lem}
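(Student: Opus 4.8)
The plan is to split $\int_{-\infty}^{\infty}$ into the three regions $t<-1$, $|t|\leqslant1$, and $t>1$, showing that the two outer regions decay like $O(|v|^{-N})$ for every $N$ while the middle region produces the dominant $O(|v|^{-1})$. Throughout I write $R$ for a radius with $\supp V^{\rm sing}\subset\{|x|\leqslant R\}$ and fix $\eta>0$ with $\supp\mathscr{F}\Phi_0\subset\{|\xi|\leqslant\eta\}$. The structural point that makes everything go is that $V^{\rm sing}$ is a multiplication operator: it commutes with every phase factor $\mathscr{M}(\cdot)$ and $e^{-{\rm i}(\lambda-1)x^2/(2t)}$ occurring in the decompositions, and since these have modulus one they drop out of the $L^2$-norm, while each dilation can be conjugated through $V^{\rm sing}$ at the cost of rescaling its argument.

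For $t>1$ I would invoke Lemma \ref{lem0}. Writing $\tau=t^{2\lambda-1}/(2\lambda-1)$ and using $e^{{\rm i}\theta A}xe^{-{\rm i}\theta A}=e^{\theta}x$ with $\theta=(\lambda-1)\log t$, together with the momentum boost $e^{-{\rm i}\tau p^2/2}e^{{\rm i}v\cdot x}=e^{{\rm i}v\cdot x}e^{-{\rm i}\tau v^2/2}e^{-{\rm i}\tau v\cdot p}e^{-{\rm i}\tau p^2/2}$, the whole expression collapses (after discarding unimodular prefactors) to
\[
\|V^{\rm sing}(x)U_{0,\lambda}(t)\Phi_v\|=\|V^{\rm sing}(t^{-(\lambda-1)}x)\,[e^{-{\rm i}\tau p^2/2}\Phi_0](x-\tau v)\|.
\]
The rescaled potential is supported in $\{|x|\leqslant t^{\lambda-1}R\}$, on which the translated packet is evaluated at $|x-\tau v|\geqslant\tau|v|-t^{\lambda-1}R\geqslant\tau|v|/2$ for large $|v|$, uniformly in $t>1$ (indeed $t^{\lambda-1}R=\tau|v|\cdot R(2\lambda-1)/(|v|t^{\lambda})$ is small relative to $\tau|v|$). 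To absorb the $L^q$ singularity I apply H\"older with $1/r=1/2-1/q$ and a volume bound, reducing matters to the pointwise free-propagation estimate $|[e^{-{\rm i}\tau p^2/2}\Phi_0](z)|\lesssim_N\langle z\rangle^{-N}$ valid for $|z|\geqslant2\eta|\tau|$ (the pointwise analogue of Proposition \ref{enss}, obtained by non-stationary phase). The scaling of $\|V^{\rm sing}(t^{-(\lambda-1)}\cdot)\|_{L^q}$ and of the volume factor contribute $t^{n(\lambda-1)/2}$, so that
\[
\|V^{\rm sing}(x)U_{0,\lambda}(t)\Phi_v\|\lesssim_N t^{n(\lambda-1)/2}(\tau|v|)^{-N}\lesssim_N t^{n(\lambda-1)/2-(2\lambda-1)N}|v|^{-N}.
\]
Choosing $N$ with $(2\lambda-1)N-n(\lambda-1)/2>1$ makes $\int_{1}^{\infty}$ converge to $O(|v|^{-N})$; the region $t<-1$ is handled identically.

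For $|t|\leqslant1$ I would insert the Mehler formula \eqref{mehler2}, drop the phase $\mathscr{M}(\coth\omega t/\omega)$, and conjugate $\mathscr{D}(\cosh\omega t)$ through $V^{\rm sing}$, obtaining $\|V^{\rm sing}(x)e^{-{\rm i}tH_0}\Phi_v\|\lesssim\|V^{\rm sing}(\cosh\omega t\,x)e^{-{\rm i}s(t)p^2/2}\Phi_v\|$ with $s(t)=\tanh\omega t/\omega$. Since $\cosh\omega t\in[1,\cosh\omega]$, the rescaled potential stays supported in $B_R$ with uniformly bounded $L^q$-norm, and the substitution $s=s(t)$ (Jacobian $\cosh^2\omega t$, bounded on $[-1,1]$) bounds the remaining integral, after extension to $\mathbb{R}$, by the standard Enss--Weder free-flow integral $\int_{\mathbb{R}}\|V^{\rm sing}(\cdot)e^{-{\rm i}sp^2/2}\Phi_v\|\,ds$. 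Boosting out $v$ and splitting at $|s|\,|v|\sim R$, the near part has length $O(|v|^{-1})$ with a uniformly bounded integrand (using $\|e^{-{\rm i}sp^2/2}\Phi_0\|_{L^\infty}\leqslant(2\pi)^{-n/2}\|\mathscr{F}\Phi_0\|_{L^1}$), while the far part integrates $(|s|\,|v|)^{-N}$ to $O(|v|^{-1})$; thus the middle region contributes exactly the stated rate $O(|v|^{-1})$.

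The main obstacle is the outer region: the dilation built into $U_{0,\lambda}(t)$ produces the growing Jacobian $t^{n(\lambda-1)/2}$, and one must check that the kinetic ``time'' $\tau\sim t^{2\lambda-1}$ transports the fast packet out to distance $\sim t^{2\lambda-1}|v|$, so that the propagation decay $(\tau|v|)^{-N}$ overwhelms this growth once $N$ is large — this is precisely where the super-linear growth of $\tau$ (equivalently $2\lambda-1>1$) is decisive. A secondary difficulty is the compatibility of the H\"older step with the decay estimate, the usual complication caused by the Coulomb-type local singularity of $V^{\rm sing}$, which is why a pointwise (rather than merely $L^2\!\to\!L^2$) propagation bound is needed. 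Summing the three regions yields $O(|v|^{-1})$, with the middle region alone responsible for the rate.
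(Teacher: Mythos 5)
Your argument is correct, and while it shares the paper's overall architecture (split at $|t|=1$, use the Mehler formula \eqref{mehler2} for $|t|\leqslant1$ and the decomposition \eqref{propagator_lambda} for $|t|>1$, conjugate the dilation through $V^{\rm sing}$, boost out $v$), the core estimates are obtained by a genuinely different mechanism. The paper never uses pointwise information: it inserts the resolvent $\langle p\rangle^{-2}$ (respectively $\langle p/\cosh\omega t\rangle^{-2}$, $\langle t^{\lambda-1}p\rangle^{-2}$) so that $V^{\rm sing}(x)\langle p\rangle^{-2}$ is a bounded operator, splits the identity into three pieces via characteristic functions $F(|x|\gtrless\cdots)$, controls the ``good'' piece by the operator-norm propagation estimate of Proposition \ref{enss}, the tail piece by $\langle x\rangle^{-2}\|\langle x\rangle^2\Phi_0\|$, and the remaining piece by translating the argument of $V^{\rm sing}$ and a commutator computation (\eqref{lem1_6}). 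You instead combine H\"older in $L^q\times L^r$ with a pointwise non-stationary-phase bound $|[e^{-{\rm i}\tau p^2/2}\Phi_0](z)|\lesssim_N\langle z\rangle^{-N}$ for $|z|\geqslant2\eta|\tau|$, exploiting that the packet's center $\tau v$ leaves the (growing) support $\{|x|\leqslant t^{\lambda-1}R\}$ of the rescaled potential uniformly in $t>1$ because $2\lambda-1>\lambda-1$; your bookkeeping of the Jacobian $t^{n(\lambda-1)/2}$ against $(\tau|v|)^{-N}$ is right, and you even get $O(|v|^{-N})$ on the outer regions where the paper settles for $O(|v|^{-2})$. What your route buys is self-containment (no Proposition \ref{enss}, no Sobolev-type boundedness of $V^{\rm sing}\langle p\rangle^{-2}$) and a transparent identification of the $O(|v|^{-1})$ rate with the length of the time window where the boosted packet overlaps $\supp V^{\rm sing}$; what the paper's route buys is that the same operator-level objects ($\|V^{\rm sing}(x)\langle p\rangle^{-2}F(|x|\geqslant\tau/2)\|$, the Enss condition \eqref{enss_condition}) are reused verbatim in the proof of Theorem \ref{thm2} and Theorem \ref{thm1}, and that the argument survives dropping compact support in favour of the weighted boundedness assumed in Remark 2.6, where your H\"older-plus-support argument would not directly apply.
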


\begin{proof}[Proof of Lemma \ref{lem1}]
The strategy of this proof is originated in \cite[Lemmas 2.3 and 3.2]{Is7}. Let $f\in C_0^\infty(\mathbb{R}^n)$ satisfy $\Phi_0=f(p)\Phi_0$ and $\supp f\subset\{\xi\in\mathbb{R}^n\bigm||\xi|\leqslant\eta\}$ for some $\eta>0$. We separate the integral into two parts $|t|\leqslant1$ and $|t|>1$, so that
\begin{equation}
\int_{-\infty}^{\infty}=\int_{|t|\leqslant1}+\int_{|t|>1}
\end{equation}
and first consider $|t|\leqslant1$. By \eqref{mehler2} and
\begin{equation}
e^{-{\rm i}v\cdot x}e^{-{\rm i}\tanh\omega tp^2/(2\omega)}e^{-{\rm i}v\cdot x}=e^{-{\rm i}\tanh\omega t|v|^2/(2\omega)}e^{-{\rm i}\tanh\omega tp\cdot v/\omega}e^{-{\rm i}\tanh\omega tp^2/(2\omega)},\label{lem1_1}
\end{equation}
we have
\begin{gather}
\|V^{\rm sing}(x)e^{-{\rm i}tH_0}\Phi_v\|=\|V^{\rm sing}(\cosh\omega tx)e^{-{\rm i}\tanh\omega tp^2/(2\omega)}\Phi_v\|\nonumber\\
=\|V^{\rm sing}(\cosh\omega tx+\sinh\omega tv/\omega)\langle p/\cosh\omega t\rangle^{-2}e^{-{\rm i}\tanh\omega tp^2/(2\omega)}\langle p/\cosh\omega t\rangle^2\Phi_0\|\nonumber\\
\leqslant I_1+I_2+I_3,
\end{gather}
where
\begin{align}
I_1=&\|V^{\rm sing}(\cosh\omega tx)\langle p/\cosh\omega t\rangle^{-2}\|\|F(|x|\geqslant|\tanh\omega t||v|/(2\omega))\nonumber\\
&\times e^{-{\rm i}\tanh\omega tp^2/(2\omega)}f(p)F(|x|\leqslant|\tanh\omega t||v|/(4\omega))\|\|\langle p/\cosh\omega t\rangle^2\Phi_0\|,\nonumber\\
I_2=&\|V^{\rm sing}(\cosh\omega tx)\langle p/\cosh\omega t\rangle^{-2}\|\|F(|x|\geqslant|\tanh\omega t||v|/(2\omega))e^{-{\rm i}\tanh\omega tp^2/(2\omega)}\nonumber\\
&\times f(p)F(|x|>|\tanh\omega t||v|/(4\omega))\langle x\rangle^{-2}\|\|\langle x\rangle^2\langle p/\cosh\omega t\rangle^2\Phi_0\|,\nonumber\\
I_3=&\|V^{\rm sing}(\cosh\omega tx+\sinh\omega tv/\omega)\langle p/\cosh\omega t\rangle^{-2}F(|x|<|\tanh\omega t||v|/(2\omega))\|\nonumber\\
&\times \|\langle p/\cosh\omega t\rangle^2\Phi_0\|\label{lem1_2}
\end{align}
as in the proof of \cite[Lemma 3.2]{Is7}. We note that
\begin{equation}
\|V^{\rm sing}(\cosh\omega tx)\langle p/\cosh\omega t\rangle^{-2}\|=\|V^{\rm sing}(x)\langle p\rangle^{-2}\|
\end{equation}
and
\begin{equation}
\|\langle p/\cosh\omega t\rangle^2\Phi_0\|\leqslant\|\langle x\rangle^2\langle p/\cosh\omega t\rangle^2\Phi_0\|\lesssim1
\end{equation}
because $|t|\leqslant1$. By Proposition \ref{enss} for $I_1$, we have
\begin{gather}
\int_{|t|\leqslant1}(I_1+I_2){\rm d}t\lesssim\int_0^1\langle\tanh\omega tv\rangle^{-2}{\rm d}t\nonumber\\
\lesssim\int_0^1\langle tv\rangle^{-2}{\rm d}t=|v|^{-1}\int_0^{|v|}\langle \tau\rangle^{-2}{\rm d}\tau=O(|v|^{-1}),\label{lem1_3}
\end{gather}
where we used $|\tanh\omega t|\geqslant\omega|t|/2$ and $\tau=|v|t$. For $|x|<|\tanh\omega t||v|/(2\omega)$,
\begin{equation}
|\cosh\omega tx+\sinh\omega tv/\omega|>|\sinh\omega t||v|/(2\omega)\geqslant|t||v|/2\label{lem1_4}
\end{equation}
holds and we have
\begin{gather}
\int_{|t|\leqslant1}I_3{\rm d}t\lesssim\int_0^1\|V^{\rm sing}(x)\langle p\rangle^{-2}F(|x|\geqslant t|v|/2)\|{\rm d}t\nonumber\\
=|v|^{-1}\int_0^1+|v|^{-1}\int_1^{|v|}\|V^{\rm sing}(x)\langle p\rangle^{-2}F(|x|\geqslant\tau/2)\|{\rm d}\tau.\label{lem1_5}
\end{gather}
To estimate this integral from $1$ to $|v|$, we take $\chi\in C^\infty(\mathbb{R}^n)$ such that $\chi(x)=1$ if $|x|\geqslant1$ and $\chi(x)=0$ if $|x|\leqslant1/2$ and we have
\begin{gather}
\|V^{\rm sing}(x)\langle p\rangle^{-2}F(|x|\geqslant\tau/2)\|\leqslant\|V^{\rm sing}(x)\langle p\rangle^{-2}\chi(2x/\tau)\|\nonumber\\
\lesssim\|V^{\rm sing}(x)\chi(2x/\tau)\langle p\rangle^{-2}\|+\tau^{-1}\|V^{\rm sing}(x)(\nabla\chi)(2x/\tau)\langle p\rangle^{-2}\|+\tau^{-2}\label{lem1_6}
\end{gather}
by computation of the commutator $[\langle p\rangle^{-2},\chi(2x/\tau)]$. Because $V^{\rm sing}$ is compactly supported, the integral intervals of the first and second terms in \eqref{lem1_6} are finite. We thus have
\begin{equation}
\int_{|t|\leqslant1}I_3{\rm d}t=O(|v|^{-1}).\label{lem1_7}
\end{equation}
We next consider the integral on $|t|>1$, in particular for $t>1$ because the case $t<-1$ can be handled similarly. By \eqref{propagator_lambda}, $e^{-{\rm i}(\lambda-1)\log tA}x_je^{{\rm i}(\lambda-1)\log tA}=t^{1-\lambda}x_j$, and
\begin{gather}
e^{-{\rm i}v\cdot x}e^{-{\rm i}t^{2\lambda-1}p^2/(2(2\lambda-1))}e^{{\rm i}v\cdot x}\nonumber\\
=e^{-{\rm i}t^{2\lambda-1}|v|^2/(2(2\lambda-1))}e^{-{\rm i}t^{2\lambda-1}p\cdot v/(2\lambda-1)}e^{-{\rm i}t^{2\lambda-1}p^2/(2(2\lambda-1))},
\end{gather}
we have
\begin{gather}
\|V^{\rm sing}(x)U_{0,\lambda}(t)\Phi_v\|=\|V^{\rm sing}(t^{1-\lambda}x)e^{-{\rm i}t^{2\lambda-1}p^2/(2(2\lambda-1))}\Phi_v\|\nonumber\\
=\|V^{\rm sing}(t^{1-\lambda}x+t^\lambda v/(2\lambda-1))\langle t^{\lambda-1}p\rangle^{-2}e^{-{\rm i}t^{2\lambda-1}p^2/(2(2\lambda-1))}\langle t^{\lambda-1}p\rangle^2\Phi_0\|\nonumber\\
\leqslant I_4+I_5+I_6,
\end{gather}
where
\begin{align}
I_4=&\|V^{\rm sing}(t^{1-\lambda}x)\langle t^{\lambda-1}p\rangle^{-2}\|\|F(|x|\geqslant t^{2\lambda-1}|v|/(2(2\lambda-1)))\nonumber\\
&\times e^{-{\rm i}t^{2\lambda-1}p^2/(2(2\lambda-1))}f(p)F(|x|\leqslant t^{2\lambda-1}|v|/(4(2\lambda-1)))\|\|\langle t^{\lambda-1}p\rangle^2\Phi_0\|,\nonumber\\
I_5=&\|V^{\rm sing}(t^{1-\lambda}x)\langle t^{\lambda-1}p\rangle^{-2}\|\|F(|x|\geqslant t^{2\lambda-1}|v|/(2(2\lambda-1)))e^{-{\rm i}t^{2\lambda-1}p^2/(2(2\lambda-1))}\nonumber\\
&\times f(p)F(|x|>t^{2\lambda-1}|v|/(4(2\lambda-1)))\langle x\rangle^{-2}\|\|\langle x\rangle^2\langle t^{\lambda-1}p\rangle\Phi_0\|,\nonumber\\
I_6=&\|V^{\rm sing}(t^{1-\lambda}x+t^\lambda v/(2\lambda-1))\langle t^{\lambda-1}p\rangle^{-2}F(|x|<t^{2\lambda-1}|v|/(2(2\lambda-1)))\|\nonumber\\
&\times\|\langle t^{\lambda-1}p\rangle^2\Phi_0\|
\end{align}
as in \eqref{lem1_2}. We note that $\|V^{\rm sing}(t^{1-\lambda}x)\langle t^{\lambda-1}p\rangle^{-2}\|=\|V^{\rm sing}(x)\langle p\rangle^{-2}\|$ and
\begin{equation}
\|\langle t^{\lambda-1}p\rangle^2\Phi_0\|\leqslant\|\langle x\rangle^2\langle t^{\lambda-1}p\rangle^2\Phi_0\|\lesssim t^{2\lambda-2}.
\end{equation}
By Proposition \ref{enss} for $I_4$, we have
\begin{gather}
\int_1^\infty(I_4+I_5){\rm d}t\lesssim\int_1^\infty t^{2\lambda-2}\langle t^{2\lambda-1}v\rangle^{-2}{\rm d}t\nonumber\\
=(|v|^{-1}/(2\lambda-1))\int_{|v|}^\infty\langle \tau\rangle^{-2}{\rm d}\tau=O(|v|^{-2})\label{lem1_8}
\end{gather}
where we used $\tau=t^{2\lambda-1}|v|$. For $|x|<t^{2\lambda-1}|v|/(2(2\lambda-1))$,
\begin{equation}
|t^{1-\lambda}x+t^\lambda v/(2\lambda-1)|>t^\lambda|v|/(2(2\lambda-1))\label{lem1_9}
\end{equation}
holds and we have
\begin{gather}
\int_1^\infty I_6{\rm d}t\lesssim\int_1^\infty t^{2\lambda-2}\|V^{\rm sing}(x)\langle p\rangle^{-2}F(|x|>t^\lambda|v|/(2(2\lambda-1)))\|{\rm d}t\nonumber\\
=(|v|^{-2+1/\lambda}/\lambda)\int_{|v|}^\infty\tau^{1-1/\lambda}\|V^{\rm sing}(x)\langle p\rangle^{-2}F(|x|>\tau/(2(2\lambda-1)))\|{\rm d}\tau\label{lem1_10}
\end{gather}
where we used $\tau=t^\lambda|v|$. Because $V^{\rm sing}$ is compactly supported, we have
\begin{equation}
\|V^{\rm sing}(x)\langle p\rangle^{-2}F(|x|>\tau/(2(2\lambda-1)))\|\lesssim\tau^{-2}
\end{equation}
by \eqref{lem1_6} for $\tau\geqslant|v|\gg1$. We thus have
\begin{equation}
\int_1^\infty I_6{\rm d}t\lesssim|v|^{-2+1/\lambda}\int_{|v|}^\infty\tau^{-1-1/\lambda}{\rm d}\tau=O(|v|^{-2}).\label{lem1_11}
\end{equation}
Combining \eqref{lem1_3}, \eqref{lem1_7}, \eqref{lem1_8}, and \eqref{lem1_11} completes the proof.
\end{proof}

We now present the proofs for the two cases mentioned in the Introduction.

\subsection{Case of $\sigma\leqslant2$}

\begin{lem}\label{lem2}
Let $\Phi_v$ be as in Theorem \ref{thm2}. Then
\begin{equation}
\int_{-\infty}^\infty\|V^{\rm reg}(x)U_{0,\lambda}(t)\Phi_v\|{\rm d}t=O(|v|^{-\rho})\label{lem2_1}
\end{equation}
holds as $|v|\rightarrow\infty$.
\end{lem}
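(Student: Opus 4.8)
The plan is to follow the same architecture as the proof of Lemma \ref{lem1}, splitting $\int_{-\infty}^\infty=\int_{|t|\leqslant1}+\int_{|t|>1}$ and handling the two regimes through the Mehler decomposition \eqref{mehler2} and the $U_{0,\lambda}$-decomposition \eqref{propagator_lambda} respectively. The essential simplification compared with Lemma \ref{lem1} is that $V^{\rm reg}$ is a \emph{bounded} multiplication operator with $|V^{\rm reg}(x)|\lesssim\langle x\rangle^{-\rho}$, so the regularizing factors $\langle p\rangle^{-2}$ that were needed to tame the $L^q$-singularity of $V^{\rm sing}$ can be dropped; the polynomial spatial decay of $V^{\rm reg}$ itself now supplies the rate. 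As before I would fix $f\in C_0^\infty(\mathbb{R}^n)$ with $\Phi_0=f(p)\Phi_0$ and $\supp f\subset\{|\xi|\leqslant\eta\}$, and use the momentum-translation identity \eqref{lem1_1} (and its $t>1$ analogue) to turn $\Phi_v=e^{{\rm i}v\cdot x}\Phi_0$ into a phase times a shift of the argument of $V^{\rm reg}$ by $\sinh\omega t\,v/\omega$ for $|t|\leqslant1$, or by $t^\lambda v/(2\lambda-1)$ for $t>1$.

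For $|t|\leqslant1$, reducing via \eqref{mehler2} exactly as in the opening of Lemma \ref{lem1} gives $\|V^{\rm reg}(x)e^{-{\rm i}tH_0}\Phi_v\|=\|V^{\rm reg}(\cosh\omega t\,x+\sinh\omega t\,v/\omega)e^{-{\rm i}\tanh\omega t\,p^2/(2\omega)}\Phi_0\|$, and I would insert the characteristic-function partition at $|x|=|\tanh\omega t||v|/(2\omega)$ as in \eqref{lem1_2}. On the far set Proposition \ref{enss} controls the free propagator between the two regions, and since $V^{\rm reg}$ is bounded and $\Phi_0$ Schwartz, the resulting contribution integrates to $O(|v|^{-1})$ just as in \eqref{lem1_3}; because $\rho<1$ this is absorbed into the target rate. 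On the near set $|x|<|\tanh\omega t||v|/(2\omega)$ the lower bound \eqref{lem1_4} forces the argument of $V^{\rm reg}$ to exceed $|t||v|/2$, so the decay hypothesis gives $\lesssim\langle tv\rangle^{-\rho}$; substituting $\tau=|v|t$ yields $|v|^{-1}\int_0^{|v|}\langle\tau\rangle^{-\rho}\,{\rm d}\tau=O(|v|^{-\rho})$, where the dominant $|v|^{1-\rho}$ growth of the $\tau$-integral uses $\rho<1$. Hence the $|t|\leqslant1$ part is $O(|v|^{-\rho})$.

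For $t>1$ (the case $t<-1$ being symmetric), the decomposition \eqref{propagator_lambda} reduces the integrand to $\|V^{\rm reg}(t^{1-\lambda}x+t^\lambda v/(2\lambda-1))e^{-{\rm i}t^{2\lambda-1}p^2/(2(2\lambda-1))}\Phi_0\|$. Partitioning at $|x|=t^{2\lambda-1}|v|/(2(2\lambda-1))$, the far piece is again controlled by Proposition \ref{enss}, integrating to $O(|v|^{-N})$ for every $N$ and hence negligible; on the near set the bound \eqref{lem1_9} shows the argument of $V^{\rm reg}$ exceeds $t^\lambda|v|/(2(2\lambda-1))$, giving $\lesssim\langle t^\lambda v\rangle^{-\rho}$. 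The crux is then the scalar integral $\int_1^\infty\langle t^\lambda v\rangle^{-\rho}\,{\rm d}t$; the substitution $\tau=t^\lambda|v|$ converts it to $\lambda^{-1}|v|^{-1/\lambda}\int_{|v|}^\infty\langle\tau\rangle^{-\rho}\tau^{1/\lambda-1}\,{\rm d}\tau$, whose integrand decays like $\tau^{-\rho+1/\lambda-1}$. This is exactly where the standing short-range hypothesis $\rho>1/\lambda$ is indispensable: it makes the exponent strictly below $-1$, so the integral converges and equals $O(|v|^{-\rho+1/\lambda})$, producing $O(|v|^{-\rho})$ overall. Combining the two regimes completes the proof. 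The main obstacle I anticipate is bookkeeping rather than conceptual, namely verifying that after dropping the $\langle p\rangle^{-2}$ smoothing the Enss-estimate pieces on the far sets still decay fast enough in both $t$ and $|v|$ to be absorbed; the genuinely delicate point is the balance in the $t>1$ integral, where the threshold $\rho>1/\lambda$ is precisely the condition securing convergence, and where any naive estimate ignoring this balance would fail to yield the claimed $O(|v|^{-\rho})$.
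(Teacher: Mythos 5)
Your proposal is correct and follows essentially the same route as the paper's proof: the same split at $|t|=1$, the same partition of configuration space with Proposition \ref{enss} on the far set (made simpler by dropping the $\langle p\rangle^{-2}$ regularization since $V^{\rm reg}$ is bounded), and the same substitutions $\tau=|v|t$ and $\tau=t^\lambda|v|$ on the near sets, with $\rho<1$ and $\rho>1/\lambda$ playing exactly the roles you identify. No gaps.
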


\begin{proof}[Proof of Lemma \ref{lem2}]
This proof can be demonstrated similarly to the proof of Lemma \ref{lem1} (see also \cite[Lemma 2.3]{Is7}). For the integral on $|t|\leqslant1$, we do not need to insert the resolvent $\langle p/\cosh\omega t\rangle^{-2}$ because $V^{\rm reg}$ is bounded. We therefore have
\begin{equation}
\|V^{\rm reg}(x)e^{-{\rm i}tH_0}\Phi_v\|\leqslant I_1+I_2+I_3,
\end{equation}
where
\begin{align}
I_1=&\|V^{\rm reg}(x)\|\|F(|x|\geqslant|\tanh\omega t||v|/(2\omega))\nonumber\\
&\times e^{-{\rm i}\tanh\omega tp^2/(2\omega)}f(p)F(|x|\leqslant|\tanh\omega t||v|/(4\omega))\|\|\Phi_0\|,\nonumber\\
I_2=&\|V^{\rm reg}(x)\|\|F(|x|\geqslant|\tanh\omega t||v|/(2\omega))e^{-{\rm i}\tanh\omega tp^2/(2\omega)}\nonumber\\
&\times f(p)F(|x|>|\tanh\omega t||v|/(4\omega))\langle x\rangle^{-2}\|\|\langle x\rangle^2\Phi_0\|,\nonumber\\
I_3=&\|V^{\rm reg}(\cosh\omega tx+\sinh\omega tv/\omega)F(|x|<|\tanh\omega t||v|/(2\omega))\|\|\Phi_0\|.
\end{align}
We have
\begin{equation}
\int_{|t|\leqslant1}(I_1+I_2){\rm d}t=O(|v|^{-1})
\end{equation}
as in the proof of Lemma \ref{lem1}. By the assumption \eqref{reg_decay} and same computation using \eqref{lem1_5}, $I_3$ is estimated as
\begin{equation}
\int_{|t|\leqslant1}I_3{\rm d}t\lesssim|v|^{-1}\int_0^{|v|}\langle \tau\rangle^{-\rho}{\rm d}\tau=O(|v|^{-\rho})\label{lem2_2}
\end{equation}
where we used $\rho<1$. As for the integral on $|t|>1$, we do not need to insert the resolvent $\langle t^{\lambda-1}p\rangle^{-2}$. We therefore have
\begin{equation}
\|V^{\rm reg}(x)U_{0,\lambda}(t)\Phi_v\|\leqslant I_4+I_5+I_6
\end{equation}
where
\begin{align}
I_4=&\|V^{\rm reg}(x)\|\|F(|x|\geqslant t^{2\lambda-1}|v|/(2(2\lambda-1)))\nonumber\\
&\times e^{-{\rm i}t^{2\lambda-1}p^2/(2(2\lambda-1))}f(p)F(|x|\leqslant t^{2\lambda-1}|v|/(4(2\lambda-1)))\|\|\Phi_0\|,\nonumber\\
I_5=&\|V^{\rm reg}(x)\|\|F(|x|\geqslant t^{2\lambda-1}|v|/(2(2\lambda-1)))e^{-{\rm i}t^{2\lambda-1}p^2/(2(2\lambda-1))}\nonumber\\
&\times f(p)F(|x|>t^{2\lambda-1}|v|/(4(2\lambda-1)))\langle x\rangle^{-2}\|\|\langle x\rangle^2\Phi_0\|,\nonumber\\
I_6=&\|V^{\rm reg}(t^{1-\lambda}x+t^\lambda v/(2\lambda-1))F(|x|<t^{2\lambda-1}|v|/(2(2\lambda-1)))\|\|\Phi_0\|.
\end{align}
By the same computations using \eqref{lem1_8} and \eqref{lem1_10}, $I_4$, $I_5$, and $I_6$ are estimated such that
\begin{gather}
\int_1^\infty(I_4+I_5){\rm d}t\lesssim\int_1^\infty\langle t^{2\lambda-1}v\rangle^{-2}{\rm d}t\nonumber\\
=(|v|^{-1/(2\lambda-1)})\int_{|v|}^\infty\tau^{(2-2\lambda)/(2\lambda-1)}\langle\tau\rangle^{-2}{\rm d}\tau=O(|v|^{-2}),\label{lem2_3}\\
\int_1^\infty I_6{\rm d}t\lesssim\int_1^\infty\langle t^\lambda v\rangle^{-\rho}{\rm d}t=(|v|^{-1/\lambda}/\lambda)\int_{|v|}^\infty\tau^{1/\lambda-1}\langle \tau\rangle^{-\rho}{\rm d}\tau=O(|v|^{-\rho})\label{lem2_4}
\end{gather}
because $(2-2\lambda)/(2\lambda-1)-2<-1$ and $1/\lambda-1-\rho<-1$.
\end{proof}

\begin{rem}
{\rm In the assumption regarding the size of $\rho$, the upper bound $\rho<1$ is not essential. We impose this upper bound because our interest is in the case where $\rho$ is close to $1/\lambda$. For the general $\rho>1/\lambda$ without this upper bound, \eqref{lem2_1} is replaced with
\begin{equation}
\int_{-\infty}^\infty\|V^{\rm reg}(x)U_{0,\lambda}(t)\Phi_v\|{\rm d}t=
\begin{cases}
\ O(|v|^{-\rho})\quad & \mbox{if}\quad \rho<1,\\
\ O(|v|^{-1}\log|v|)\quad & \mbox{if}\quad \rho=1,\\
\ O(|v|^{-1})\quad & \mbox{if}\quad \rho>1
\end{cases}
\end{equation}
by \eqref{lem2_2}.}
\end{rem}

\begin{lem}\label{lem3}
Let $\Phi_v$ be as in Theorem \ref{thm2}. Then
\begin{equation}
\sup_{t\in\mathbb{R}}\|(U(t,0)W_\lambda^--U_{0,\lambda}(t))\Phi_v\|=O(|v|^{-\rho})
\end{equation}
holds as $|v|\rightarrow\infty$.
\end{lem}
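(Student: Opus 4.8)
The plan is to prove the bound by Cook's (Duhamel) method, reducing the uniform-in-$t$ estimate to the time-integrated propagation estimates already established in Lemmas \ref{lem1} and \ref{lem2}.

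First I would put $U(t,0)W_\lambda^-$ into a form amenable to differentiation in a single time variable. Since $W_\lambda^-=\slim_{s\rightarrow-\infty}U(s,0)^*U_{0,\lambda}(s)$ and $U(t,0)U(s,0)^*=U(t,s)$ by the propagator properties, we have $U(t,0)W_\lambda^-=\slim_{s\rightarrow-\infty}U(t,s)U_{0,\lambda}(s)$, so that
\[
(U(t,0)W_\lambda^- - U_{0,\lambda}(t))\Phi_v=\slim_{s\rightarrow-\infty}\bigl(U(t,s)U_{0,\lambda}(s)-U_{0,\lambda}(t)\bigr)\Phi_v .
\]
Next I would differentiate $\tau\mapsto U(t,\tau)U_{0,\lambda}(\tau)$. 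Using $\partial_\tau U(t,\tau)={\rm i}U(t,\tau)H(\tau)$ and $\partial_\tau U_{0,\lambda}(\tau)=-{\rm i}H_0(\tau)U_{0,\lambda}(\tau)$, the quadratic generators cancel and only the potential survives:
\[
\partial_\tau\bigl(U(t,\tau)U_{0,\lambda}(\tau)\bigr)={\rm i}U(t,\tau)(H(\tau)-H_0(\tau))U_{0,\lambda}(\tau)={\rm i}U(t,\tau)VU_{0,\lambda}(\tau).
\]
Integrating from $s$ to $t$ and letting $s\rightarrow-\infty$ would then give the Duhamel representation
\[
(U(t,0)W_\lambda^- - U_{0,\lambda}(t))\Phi_v=-{\rm i}\int_{-\infty}^t U(t,\tau)VU_{0,\lambda}(\tau)\Phi_v\,{\rm d}\tau .
\]

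From here the estimate is immediate. Since each $U(t,\tau)$ is unitary, I would bound
\[
\bigl\|(U(t,0)W_\lambda^- - U_{0,\lambda}(t))\Phi_v\bigr\|\leqslant\int_{-\infty}^t\|VU_{0,\lambda}(\tau)\Phi_v\|\,{\rm d}\tau\leqslant\int_{-\infty}^\infty\|VU_{0,\lambda}(\tau)\Phi_v\|\,{\rm d}\tau ,
\]
and the last integral is manifestly independent of $t$, so it already controls the supremum over $t\in\mathbb{R}$. Writing $V=V^{\rm sing}+V^{\rm reg}$ and applying Lemma \ref{lem1} to the singular part and Lemma \ref{lem2} to the regular part yields $O(|v|^{-1})+O(|v|^{-\rho})=O(|v|^{-\rho})$, the regular contribution dominating because $\rho<1$. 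This is exactly the claimed bound.

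The step I expect to be the main obstacle is the rigorous justification of the Duhamel identity uniformly in $t$, and specifically the behaviour of $U_{0,\lambda}(\tau)$ at the junction times $\tau=\pm1$, where $k(\tau)$ changes form and the explicit factorisation \eqref{propagator_lambda} is glued to $e^{-{\rm i}\tau H_0}$. One must verify that $\tau\mapsto U(t,\tau)U_{0,\lambda}(\tau)$ is differentiable with derivative ${\rm i}U(t,\tau)VU_{0,\lambda}(\tau)$ in the sense required for the fundamental theorem of calculus across $\pm1$, so that no spurious boundary contributions appear and the resulting bound stays $t$-uniform. Once that regularity is in hand, the remainder is routine, the only quantitative input being the integrability of $\|VU_{0,\lambda}(\tau)\Phi_v\|$ supplied by Lemmas \ref{lem1} and \ref{lem2}.
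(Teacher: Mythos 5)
Your argument is exactly the paper's: the paper writes $W_\lambda^- - U(t,0)^*U_{0,\lambda}(t)=-{\rm i}\int_{-\infty}^{t}U(\tau,0)^*V U_{0,\lambda}(\tau)\,{\rm d}\tau$ by the same Cook--Duhamel computation (your version just multiplies through by the unitary $U(t,0)$ first), and then bounds the norm by the $t$-independent integrals handled in Lemmas \ref{lem1} and \ref{lem2}. The proposal is correct and essentially identical to the paper's proof, including the final $O(|v|^{-1})+O(|v|^{-\rho})=O(|v|^{-\rho})$ conclusion.
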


\begin{proof}[Proof of Lemma \ref{lem3}]
This proof is originated in \cite[Corollay 2.3]{EnWe} (see also \cite{Is4, Is7, Ni3, We}). We have
\begin{gather}
W_\lambda^--U(t,0)^*U_{0,\lambda}(t)=-\int_{-\infty}^t({\rm d}/{\rm d}\tau)U(\tau,0)^*U_{0,\lambda}(\tau){\rm d}\tau\nonumber\\
=-{\rm i}\int_{-\infty}^tU(\tau,0)^*V(x)U_{0,\lambda}(\tau){\rm d}\tau
\end{gather}
and
\begin{gather}
\|(W_\lambda^--U(t,0)^*U_{0,\lambda}(t))\Phi_v\|\nonumber\\
\leqslant\int_{-\infty}^\infty\|V^{\rm sing}(x)U_{0,\lambda}(t)\Phi_v\|{\rm d}\tau+\int_{-\infty}^\infty\|V^{\rm reg}(x)U_{0,\lambda}(t)\Phi_v\|{\rm d}\tau.
\end{gather}
Lemmas \ref{lem1} and \ref{lem2} complete this proof.
\end{proof}

\begin{proof}[Proof of Theorem \ref{thm2} for $\sigma\leqslant2$]
Noting that $[S_\lambda,p_j]=[S_\lambda-1,p_j-v_j]$, $(p_j-v_j)\Phi_v=(p_j\Phi_0)_v$, and
\begin{equation}
{\rm i}(S_\lambda-1)={\rm i}(W_\lambda^+-W_\lambda^-)^*W_\lambda^-=\int_{-\infty}^\infty U_{0,\lambda}(t)^*V(x)U(t,0)W_\lambda^-{\rm d}t,
\end{equation}
we have
\begin{equation}
|v|({\rm i}[S_\lambda,p_j]\Phi_v,\Psi_v)=I(v)+R(v),
\end{equation}
where
\begin{align}
I(v)&=|v|\int_{-\infty}^\infty\left\{\right.(V(x)U_{0,\lambda}(t)(p_j\Phi_0)_v,U_{0,\lambda}(t)\Psi_v)\nonumber\\
&\quad-(V(x)U_{0,\lambda}(t)\Phi_v,U_{0,\lambda}(t)(p_j\Psi_0)_v)\left.\right\}{\rm d}t,\\
R(v)&=|v|\int_{-\infty}^\infty\left\{\right.((U(t,0)W_\lambda^--U_{0,\lambda}(t))(p_j\Phi_0)_v,V(x)U_{0,\lambda}(t)\Psi_v)\nonumber\\
&\quad-((U(t,0)W_\lambda^--U_{0,\lambda}(t))\Phi_v,V(x)U_{0,\lambda}(t)(p_j\Psi_0)_v)\left.\right\}{\rm d}t.
\end{align}
Lemmas \ref{lem1}, \ref{lem2}, and \ref{lem3} imply that
\begin{equation}
R(v)=O(|v|^{1-2\rho})\label{thm2_1}
\end{equation}
and that this converges to zero as $|v|\rightarrow\infty$ because $\rho>1/2$ for $\sigma\leqslant2$. We now prove that $I(v)$ converges to the right-hand side of \eqref{reconstructing_limit}. We separate the integral into two parts, $|t|\leqslant1$ and $|t|>1$, so that
\begin{equation}
\int_{-\infty}^\infty=\int_{|t|\leqslant1}+\int_{|t|>1}
\end{equation}
and first consider $|t|\leqslant1$. Using \eqref{mehler2} and \eqref{lem1_1}, we have
\begin{equation}
e^{-{\rm i}v\cdot x}e^{{\rm i}tH_0}V(x)e^{-{\rm i}tH_0}e^{{\rm i}v\cdot x}=e^{{\rm i}tH_0}V(x+\sinh\omega tv/\omega)e^{-{\rm i}tH_0}.
\end{equation}
We thus have
\begin{gather}
(V(x)e^{-{\rm i}tH_0}(p_j\Phi_0)_v,e^{-{\rm i}tH_0}\Psi_v)-(V(x)e^{-{\rm i}tH_0}\Phi_v,e^{-{\rm i}tH_0}(p_j\Psi_0)_v)\nonumber\\
=(V^{\rm sing}(x+\sinh\omega tv/\omega)e^{-{\rm i}tH_0}p_j\Phi_0,e^{-{\rm i}tH_0}\Psi_0)\nonumber\\
\quad-(V^{\rm sing}(x+\sinh\omega tv/\omega)e^{-{\rm i}tH_0}\Phi_0,e^{-{\rm i}tH_0}p_j\Psi_0)\nonumber\\
\quad+\cosh\omega t({\rm i}(\partial_{x_j}V^{\rm reg})(x+\sinh\omega tv/\omega)e^{-{\rm i}tH_0}\Phi_0,e^{-{\rm i}tH_0}\Psi_0),\label{thm2_2}
\end{gather}
where we used
\begin{equation}
e^{{\rm i}tH_0}p_je^{-{\rm i}tH_0}=\omega\sinh\omega tx_j+\cosh\omega tp_j\label{thm2_3}
\end{equation}
obtained by \cite[Lemma 6]{Ni3}. The integral of the first term on the right-hand side of \eqref{thm2_2} is
\begin{gather}
|v|\int_{|t|\leqslant1}(V^{\rm sing}(x+\sinh\omega tv/\omega)e^{-{\rm i}tH_0}p_j\Phi_0,e^{-{\rm i}tH_0}\Psi_0){\rm d}t=\int_{|\tau|\leqslant\sinh\omega|v|/\omega}\langle\omega\tau/|v|\rangle^{-1}\nonumber\\
\quad\times(V^{\rm sing}(x+\hat{v}\tau)e^{-{\rm i}\arcsinh(\omega\tau/|v|)H_0/\omega}p_j\Phi_0,e^{-{\rm i}\arcsinh(\omega\tau/|v|)H_0/\omega}\Psi_0){\rm d}\tau.
\end{gather}
Because $e^{-{\rm i}tH_0}$ is strongly continuous at $t=0$, we have
\begin{gather}
\langle\omega\tau/|v|\rangle^{-1}(V^{\rm sing}(x+\hat{v}\tau)e^{-{\rm i}\arcsinh(\omega\tau/|v|)H_0/\omega}p_j\Phi_0,e^{-{\rm i}\arcsinh(\omega\tau/|v|)H_0/\omega}\Psi_0)\nonumber\\
\rightarrow(V^{\rm sing}(x+\hat{v}\tau)p_j\Phi_0,\Psi_0)
\end{gather}
as $|v|\rightarrow\infty$ for any $\tau\in\mathbb{R}$ noting that
\begin{equation}
\|\langle p\rangle^2e^{-{\rm i}\arcsinh(\omega\tau/|v|)H_0/\omega}\Phi_0\|\lesssim1
\end{equation}
by \eqref{thm2_3}. We now have
\begin{gather}
|v|\int_{|t|\leqslant1}|(V^{\rm sing}(x)e^{-{\rm i}tH_0}(p_j\Phi_0)_v,e^{-{\rm i}tH_0}\Psi_v)|{\rm d}t\nonumber\\
=\int_{|\tau|\leqslant|v|}|(V^{\rm sing}(x)e^{-{\rm i}(\tau/|v|)H_0}(p_j\Phi_0)_v,e^{-{\rm i}(\tau/|v|)H_0}\Psi_v)|{\rm d}\tau
\end{gather}
by $\tau=|v|t$. As we showed in the proof of Lemma \ref{lem1}, the integrand above is estimated as
\begin{gather}
|(V^{\rm sing}(x)e^{-{\rm i}(\tau/|v|)H_0}(p_j\Phi_0)_v,e^{-{\rm i}(\tau/|v|)H_0}\Psi_v)|\leqslant\|V^{\rm sing}(x)e^{-{\rm i}(\tau/|v|)H_0}(p_j\Phi_0)_v\|\|\Psi_0\|\nonumber\\
\lesssim\langle\tau\rangle^{-2}+\|V^{\rm sing}(x)\langle p\rangle^{-2}F(|x|\geqslant|\tau|/2)\|.\label{thm2_4}
\end{gather}
The right-hand side of \eqref{thm2_4} is integrable for $\tau$ independently of $v$. By the Lebesgue dominated convergence theorem, we have
\begin{equation}
|v|\int_{|t|\leqslant1}(V^{\rm sing}(x)e^{-{\rm i}tH_0}(p_j\Phi_0)_v,e^{-{\rm i}tH_0}\Psi_v){\rm d}t\rightarrow\int_{-\infty}^\infty(V^{\rm sing}(x+\hat{v}t)p_j\Phi_0,\Psi_0){\rm d}t
\end{equation}
as $|v|\rightarrow\infty$. Similarly, we have
\begin{equation}
|v|\int_{|t|\leqslant1}(V^{\rm sing}(x)e^{-{\rm i}tH_0}\Phi_v,e^{-{\rm i}tH_0}(p_j\Psi_0)_v){\rm d}t\rightarrow\int_{-\infty}^\infty(V^{\rm sing}(x+\hat{v}t)\Phi_0,p_j\Psi_0){\rm d}t
\end{equation}
as $|v|\rightarrow\infty$. The integral of the third term on the right-hand side of \eqref{thm2_2} is
\begin{gather}
|v|\int_{|v|\leqslant1}\cosh\omega t({\rm i}(\partial_{x_j}V^{\rm reg})(x+\sinh\omega tv/\omega)e^{-{\rm i}tH_0}\Phi_0,e^{-{\rm i}tH_0}\Psi_0){\rm d}t\nonumber\\
=\int_{|\tau|\leqslant\sinh\omega|v|/\omega}({\rm i}(\partial_{x_j}V^{\rm reg})(x+\hat{v}\tau)e^{-{\rm i}\arcsinh(\omega\tau/|v|)H_0/\omega}\Phi_0,e^{-{\rm i}\arcsinh(\omega\tau/|v|)H_0/\omega}\Psi_0){\rm d}\tau.
\end{gather}
Clearly,
\begin{gather}
({\rm i}(\partial_{x_j}V^{\rm reg})(x+\hat{v}\tau)e^{-{\rm i}\arcsinh(\omega\tau/|v|)H_0/\omega}\Phi_0,e^{-{\rm i}\arcsinh(\omega\tau/|v|)H_0/\omega}\Psi_0)\nonumber\\
\rightarrow({\rm i}(\partial_{x_j}V^{\rm reg})(x+\hat{v}\tau)\Phi_0,\Psi_0)
\end{gather}
as $|v|\rightarrow\infty$ holds for any $\tau\in\mathbb{R}$. We have
\begin{gather}
|v|\int_{|t|\leqslant1}\cosh\omega t|({\rm i}(\partial_{x_j}V^{\rm reg})(x)e^{-{\rm i}tH_0}\Phi_v,e^{-{\rm i}tH_0}\Psi_v)|{\rm d}t\nonumber\\
=\int_{|\tau|\leqslant|v|}\cosh(\omega\tau/|v|)|({\rm i}(\partial_{x_j}V^{\rm reg})(x)e^{-{\rm i}(\tau/|v|)H_0}\Phi_v,e^{-{\rm i}(\tau/|v|)H_0}\Psi_v)|{\rm d}\tau
\end{gather}
and also have
\begin{gather}
\cosh(\omega\tau/|v|)|({\rm i}(\partial_{x_j}V^{\rm reg})(x)e^{-{\rm i}(\tau/|v|)H_0}\Phi_v,e^{-{\rm i}(\tau/|v|)H_0}\Psi_v)|\nonumber\\
\leqslant\cosh\omega\|(\partial_{x_j}V^{\rm reg})(x)e^{-{\rm i}(\tau/|v|)H_0}\Phi_v\|\|\Psi_0\|\lesssim\langle\tau\rangle^{-\rho-1/2}\label{thm2_5}
\end{gather}
by \eqref{reg_decay} and same computations using \eqref{lem1_3} and \eqref{lem2_2}. Because $-\rho-1/2<-1$ for $\sigma\leqslant2$, we have
\begin{gather}
|v|\int_{|t|\leqslant1}\cosh\omega t({\rm i}(\partial_{x_j}V^{\rm reg})(x)e^{-{\rm i}tH_0}\Phi_v,e^{-{\rm i}tH_0}\Psi_v){\rm d}t\nonumber\\
\rightarrow\int_{-\infty}^\infty({\rm i}(\partial_{x_j}V^{\rm reg})(x+\hat{v}t)\Phi_0,\Psi_0){\rm d}t
\end{gather}
as $|v|\rightarrow\infty$ by the Lebesgue dominated convergence theorem. We next consider the integral on $|t|>1$. By \eqref{lem1_8} and \eqref{lem1_11}, we have
\begin{gather}
|v|\int_{|t|>1}\left\{\right.(V^{\rm sing}(x)U_{0,\lambda}(t)(p_j\Phi_0)_v,U_{0,\lambda}(t)\Psi_v)\nonumber\\
-(V^{\rm sing}(x)U_{0,\lambda}(t)\Phi_v,U_{0,\lambda}(t)(p_j\Psi_0)_v)\left.\right\}{\rm d}t=O(|v|^{-1}).\label{thm2_6}
\end{gather}
Using $U_{0,\lambda}(t)p_jU_{0,\lambda}(t)^*=p_j/|t|^{\lambda-1}+(1-\lambda)x_j/(t|t|^{\lambda-1})$, we have
\begin{gather}
|v|\int_{|t|>1}\left\{\right.(V^{\rm reg}(x)U_{0,\lambda}(t)(p_j\Phi_0)_v,U_{0,\lambda}(t)\Psi_v)\nonumber\\
-(V^{\rm reg}(x)U_{0,\lambda}(t)\Phi_v,U_{0,\lambda}(t)(p_j\Psi_0)_v)\left.\right\}{\rm d}t\nonumber\\
=|v|\int_{|t|>1}|t|^{1-\lambda}({\rm i}(\partial_{x_j}V^{\rm reg})(x)U_{0,\lambda}(t)\Phi_v,U_{0,\lambda}(t)\Psi_v){\rm d}t.
\end{gather}
By same computations using with \eqref{lem2_3} and \eqref{lem2_4}, we have
\begin{gather}
|v|\int_{|t|>1}|t|^{1-\lambda}|({\rm i}(\partial_{x_j}V^{\rm reg})(x)U_{0,\lambda}(t)\Phi_v,U_{0,\lambda}(t)\Psi_v)|{\rm d}t\nonumber\\
\lesssim|v|\int_1^\infty\left\{t^{1-\lambda}\langle t^{2\lambda-1}v\rangle^{-2}+t^{1-\lambda}\langle t^\lambda v\rangle^{-\rho-1/2}\right\}{\rm d}t\nonumber\\
=(|v|^{(3\lambda-3)/(2\lambda-1)}/(2\lambda-1))\int_{|v|}^\infty\tau^{(3-3\lambda)/(2\lambda-1)}\langle\tau\rangle^{-2}{\rm d}\tau\nonumber\\
+(|v|^{2-2/\lambda}/\lambda)\int_{|v|}^\infty\tau^{2/\lambda-2}\langle\tau\rangle^{-\rho-1/2}{\rm d}\tau=O(|v|^{-1})+O(|v|^{-\rho+1/2}).\label{thm2_7}
\end{gather}
Here, we used $(3-3\lambda)/(2\lambda-1)-2<-1$ and $2/\lambda-2-\rho-1/2<-1$. Estimates \eqref{thm2_6} and \eqref{thm2_7} imply that the integral of $I(v)$ on $|t|>1$ converges to zero as $|v|\rightarrow\infty$ because $\rho>1/2$ for $\sigma\leqslant2$. This completes the proof.
\end{proof}

\begin{rem}
{\rm We can prove Theorem \ref{thm2} in the case $\sigma\leqslant2$ for the mixed-type potential $V_{\rm sing}$ which satisfies $V_{\rm sing}\in L^q(\mathbb{R}^n)$ with \eqref{sing} and $\langle x\rangle^\rho V_{\rm sing}(x)\langle p\rangle^{-2}$ is the bounded operator on $L^2(\mathbb{R}^n)$ for $\rho>1/\lambda$. Indeed, Lemma \ref{lem1} holds for $V_{\rm sing}$ by slight modifications of the estimates $I_3$ and $I_6$, and the proof of Theorem \ref{thm2} for $\sigma\leqslant2$ is demonstrated in the same way.
}
\end{rem}

\subsection{Case of $\sigma>2$}
As stated in the introduction, $R(v)$ in \eqref{thm2_1} does not converge to zero if we directly apply the proof for the case $\sigma\leqslant2$ to the case for $\sigma>2$. To overcome this difficulty, we define the classical trajectory of the particle
\begin{equation}
\alpha(t)=
\begin{cases}
\ \sinh\omega t/\omega\quad & \mbox{if}\quad|t|\leqslant1,\\
\ t^\lambda/(2\lambda-1)\quad & \mbox{if}\quad t>1,\\
\ -(-t)^\lambda/(2\lambda-1)\quad & \mbox{if}\quad t<-1
\end{cases}\label{alpha}
\end{equation}
and improve the decay for $|v|$ in Lemma \ref{lem2} to the following:

\begin{lem}\label{lem4}
Let $\Phi_v$ be as in Theorem \ref{thm2}. Then
\begin{equation}
\int_{-\infty}^\infty\|\{V^{\rm reg}(x)-V^{\rm reg}(\alpha(t)v)\}U_{0,\lambda}(t)\Phi_v\|{\rm d}t=O(|v|^{-1})
\end{equation}
holds as $|v|\rightarrow\infty$.
\end{lem}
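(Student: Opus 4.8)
The plan is to run the same reductions as in the proofs of Lemmas \ref{lem1} and \ref{lem2}, but now to exploit the subtraction of the constant $V^{\rm reg}(\alpha(t)v)$ by the fundamental theorem of calculus, which trades one power of decay of $V^{\rm reg}$ for a derivative that, under the $\sigma>2$ bound in \eqref{reg_decay}, decays one order faster. First I would split $\int_{-\infty}^\infty=\int_{|t|\leqslant1}+\int_{|t|>1}$ and treat $t<-1$ exactly as $t>1$. On $|t|\leqslant1$, \eqref{mehler2} together with the shift identity \eqref{lem1_1} turns $V^{\rm reg}(x)e^{-{\rm i}tH_0}\Phi_v$ into $V^{\rm reg}(\cosh\omega t\,x+\sinh\omega t\,v/\omega)e^{-{\rm i}\tanh\omega tp^2/(2\omega)}\Phi_0$, and since $\sinh\omega t/\omega=\alpha(t)$ by \eqref{alpha}, the center is precisely $\alpha(t)v$. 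On $t>1$, using \eqref{propagator_lambda} and $e^{-{\rm i}(\lambda-1)\log tA}x_je^{{\rm i}(\lambda-1)\log tA}=t^{1-\lambda}x_j$, the argument becomes $t^{1-\lambda}x+t^\lambda v/(2\lambda-1)$ with center again $\alpha(t)v$. In both cases the subtracted constant $V^{\rm reg}(\alpha(t)v)$ is unaffected by the unitaries, so the difference vanishes at $x=0$; writing $c_t$ for the dilation factor ($\cosh\omega t$ for $|t|\leqslant1$, $t^{1-\lambda}$ for $t>1$), the quantity to estimate is $\|\{V^{\rm reg}(c_tx+\alpha(t)v)-V^{\rm reg}(\alpha(t)v)\}e^{-{\rm i}\kappa p^2/2}\Phi_0\|$, where $\kappa=\tanh\omega t/\omega$ or $\kappa=t^{2\lambda-1}/(2\lambda-1)$.

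Next I would insert $1=F(|x|\leqslant R)+F(|x|>R)$ with $R$ the inner cutoff of Lemma \ref{lem1} (so $c_tR\leqslant|\alpha(t)||v|/2$). On the inner region the fundamental theorem of calculus gives
\[
V^{\rm reg}(c_tx+\alpha(t)v)-V^{\rm reg}(\alpha(t)v)=c_t\int_0^1 x\cdot(\nabla V^{\rm reg})(\alpha(t)v+\theta c_tx)\,{\rm d}\theta,
\]
and on $|x|\leqslant R$ the argument satisfies $|\alpha(t)v+\theta c_tx|\gtrsim|\alpha(t)||v|$, so the $\sigma>2$ bound in \eqref{reg_decay} yields $|(\nabla V^{\rm reg})(\cdots)|\lesssim\langle\alpha(t)v\rangle^{-\rho-1}$. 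Pulling $|x|$ through the free flow via $xe^{-{\rm i}\kappa p^2/2}=e^{-{\rm i}\kappa p^2/2}(x+\kappa p)$ gives $\||x|e^{-{\rm i}\kappa p^2/2}\Phi_0\|\lesssim\langle\kappa\rangle$, whence the inner integrand is $\lesssim\langle\alpha(t)v\rangle^{-\rho-1}$ on $|t|\leqslant1$ and $\lesssim c_t\langle\kappa\rangle\langle\alpha(t)v\rangle^{-\rho-1}=t^\lambda\langle\alpha(t)v\rangle^{-\rho-1}$ on $t>1$.

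On the outer region $F(|x|>R)$ both potential terms are bounded since $V^{\rm reg}\in L^\infty$, and Proposition \ref{enss} (with $\Phi_0=f(p)\Phi_0$) gives $\|F(|x|>R)e^{-{\rm i}\kappa p^2/2}\Phi_0\|\lesssim(t^{2\lambda-1}|v|)^{-N}$ for every $N$, whose $t$-integral is $O(|v|^{-N})$ and hence negligible. It then remains to integrate the inner bounds. The substitution $\tau=|v|t$ (using $|\sinh\omega t|\gtrsim|t|$) reduces the $|t|\leqslant1$ piece to $|v|^{-1}\int_0^{\infty}\langle\tau\rangle^{-\rho-1}{\rm d}\tau=O(|v|^{-1})$, while $\tau=t^\lambda|v|$ reduces the $t>1$ piece to $|v|^{-1-1/\lambda}\int_{|v|}^\infty\tau^{1/\lambda}\langle\tau\rangle^{-\rho-1}{\rm d}\tau=O(|v|^{-1-\rho})$, the last integral converging because $\rho>1/\lambda$. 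Collecting the two regions and the outer remainder gives the claimed $O(|v|^{-1})$.

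The one genuinely delicate point is this final balance. The gradient produced by the mean value step costs a factor $|x|$, which grows like $\langle\kappa\rangle\sim t^{2\lambda-1}$ under the free evolution; combined with the dilation factor $c_t=t^{1-\lambda}$ this contributes $t^\lambda$, exactly matching the growth already seen in Lemma \ref{lem2}. What makes the $t$-integral converge under the sole hypothesis $\rho>1/\lambda$ is the extra power $\langle\alpha(t)v\rangle^{-1}\sim(t^\lambda|v|)^{-1}$ supplied by the stronger derivative decay $\langle\cdot\rangle^{-\rho-1}$ available for $\sigma>2$; the weaker $\langle\cdot\rangle^{-\rho-1/2}$ of the $\sigma\leqslant2$ regime would instead demand $\rho>1/\lambda+1/2$ and thus fail in general. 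This is precisely why \eqref{reg_decay} strengthens the first-derivative condition when $\sigma>2$, and it is the step I expect to require the most care.
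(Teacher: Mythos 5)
Your proposal is correct and follows essentially the same route as the paper: the same splitting into $|t|\leqslant1$ and $|t|>1$, the same inner/outer localization handled by Proposition \ref{enss}, and the same key step of expressing the difference by the fundamental theorem of calculus and commuting $x$ through the free flow so that the growth $t^{1-\lambda}\cdot t^{2\lambda-1}=t^{\lambda}$ is absorbed by $\langle t^{\lambda}v\rangle^{-\rho-1}$ using the strengthened derivative decay for $\sigma>2$. The only cosmetic discrepancy is that the outer contribution on $|t|\leqslant1$ is $O(|v|^{-1})$ (coming from $\int_0^1\langle tv\rangle^{-2}\,{\rm d}t$, since the cutoff radius degenerates near $t=0$) rather than $O(|v|^{-N})$, which is still exactly the bound the lemma requires.
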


\begin{proof}[Proof of Lemma \ref{lem4}]
As in the proofs of Lemmas \ref{lem1} and \ref{lem2}, we separate the integral to $|t|\leqslant1$ and $|t|>1$. For the integral on $|t|\leqslant1$, we have
\begin{equation}
\|\{V^{\rm reg}(x)-V^{\rm reg}(\sinh\omega tv/\omega)\}e^{-{\rm i}tH_0}\Phi_v\|\leqslant I_1+I_2+I_3,
\end{equation}
where
\begin{align}
I_1=&2\|V^{\rm reg}(x)\|\|F(|x|\geqslant|\tanh\omega t||v|/(2\omega))\nonumber\\
&\times e^{-{\rm i}\tanh\omega tp^2/(2\omega)}f(p)F(|x|\leqslant|\tanh\omega t||v|/(4\omega))\|\|\Phi_0\|,\nonumber\\
I_2=&2\|V^{\rm reg}(x)\|\|F(|x|\geqslant|\tanh\omega t||v|/(2\omega))e^{-{\rm i}\tanh\omega tp^2/(2\omega)}\nonumber\\
&\times f(p)F(|x|>|\tanh\omega t||v|/(4\omega))\langle x\rangle^{-2}\|\|\langle x\rangle^2\Phi_0\|,\nonumber\\
I_3=&\|\{V^{\rm reg}(\cosh\omega tx+\sinh\omega tv/\omega)-V^{\rm reg}(\sinh\omega tv/\omega)\}\nonumber\\
&\times F(|x|<|\tanh\omega t||v|/(2\omega))e^{-{\rm i}\tanh\omega tp^2/(2\omega)}\Phi_0\|,
\end{align}
noting that we keep $e^{-{\rm i}\tanh\omega tp^2/(2\omega)}$ in $I_3$. We have
\begin{equation}
\int_{|t|\leqslant1}(I_1+I_2){\rm d}t=O(|v|^{-1})\label{lem4_1}
\end{equation}
as in the proof of Lemma \ref{lem1}. As for $I_3$, we have
\begin{gather}
I_3\leqslant\int_0^1\|(\nabla_x V^{\rm reg})(\cosh\omega t\theta x+\sinh\omega tv/\omega)\cdot\cosh\omega t x\nonumber\\
\times F(|x|<|\tanh\omega t||v|/(2\omega))e^{-{\rm i}\tanh\omega tp^2/(2\omega)}\Phi_0\|{\rm d}\theta.
\end{gather}
We note the fact
\begin{equation}
e^{{\rm i}\tanh\omega tp^2/(2\omega)}x_je^{-{\rm i}\tanh\omega tp^2/(2\omega)}=x_j+\tanh\omega tp/\omega
\end{equation}
and that
\begin{equation}
\|x_je^{-{\rm i}\tanh\omega tp^2/(2\omega)}\Phi_0\|\lesssim1\label{lem4_2}
\end{equation}
for $1\leqslant j\leqslant n$. Using \eqref{reg_decay}, \eqref{lem1_4}, and \eqref{lem4_2}, we have
\begin{gather}
\int_{|t|\leqslant1}I_3{\rm d}t\lesssim\int_0^1\langle \sinh\omega tv/\omega\rangle^{-\rho-1}\cosh\omega t{\rm d}t\nonumber\\
=|v|^{-1}\int_0^{\sinh\omega|v|/\omega}\langle \tau\rangle^{-\rho-1}{\rm d}\tau=O(|v|^{-1})\label{lem4_3}
\end{gather}
by $\tau=\sinh\omega tv/\omega$. The idea of using \eqref{lem4_2} and \eqref{lem4_5} below originate from \cite{Is3}. For the integral on $|t|>1$, in particular for $t>1$, we have
\begin{equation}
\|\{V^{\rm reg}(x)-V^{\rm reg}(t^\lambda v/(2\lambda-1))\}U_{0,\lambda}(t)\Phi_v\|\leqslant I_4+I_5+I_6,
\end{equation}
where
\begin{align}
I_4=&2\|V^{\rm reg}(x)\|\|F(|x|\geqslant t^{2\lambda-1}|v|/(2(2\lambda-1)))\nonumber\\
&\times e^{-{\rm i}t^{2\lambda-1}p^2/(2(2\lambda-1))}f(p)F(|x|\leqslant t^{2\lambda-1}|v|/(4(2\lambda-1)))\|\|\Phi_0\|,\nonumber\\
I_5=&2\|V^{\rm reg}(x)\|\|F(|x|\geqslant t^{2\lambda-1}|v|/(2(2\lambda-1)))e^{-{\rm i}t^{2\lambda-1}p^2/(2(2\lambda-1))}\nonumber\\
&\times f(p)F(|x|>t^{2\lambda-1}|v|/(4(2\lambda-1)))\langle x\rangle^{-2}\|\|\langle x\rangle^2\Phi_0\|,\nonumber\\
I_6=&\|\{V^{\rm reg}(t^{1-\lambda}x+t^\lambda v/(2\lambda-1))-V^{\rm reg}(t^\lambda v/(2\lambda-1))\}\nonumber\\
&\times F(|x|<t^{2\lambda-1}|v|/(2(2\lambda-1)))e^{-{\rm i}t^{2\lambda-1}p^2/(2(2\lambda-1))}\Phi_0\|,
\end{align}
noting that we keep $e^{-{\rm i}t^{2\lambda-1}p^2/(2(2\lambda-1))}$ in $I_6$. We have
\begin{equation}
\int_{|t|\leqslant1}(I_4+I_5){\rm d}t=O(|v|^{-1})\label{lem4_4}
\end{equation}
as in the proof of Lemma \ref{lem1}. Using \eqref{reg_decay} again, \eqref{lem1_9}, and
\begin{equation}
\|x_je^{-{\rm i}t^{2\lambda-1}p^2/(2(2\lambda-1))}\Phi_0\|\lesssim t^{2\lambda-1}\label{lem4_5}
\end{equation}
for $1\leqslant j\leqslant n$, which follows from
\begin{equation}
e^{{\rm i}t^{2\lambda-1}p^2/(2(2\lambda-1))}x_je^{-{\rm i}t^{2\lambda-1}p^2/(2(2\lambda-1))}=x_j+t^{2\lambda-1}p/(\lambda-1),
\end{equation}
we have
\begin{gather}
I_6\leqslant\int_0^1\|(\nabla_xV^{\rm reg})(t^{1-\lambda}\theta x+t^\lambda v/(2\lambda-1))\cdot t^{1-\lambda}x\nonumber\\
\times F(|x|<t^{2\lambda-1}|v|/(2(2\lambda-1)))e^{-{\rm i}t^{2\lambda-1}p^2/(2(2\lambda-1))}\Phi_0\|{\rm d}\theta
\end{gather}
and
\begin{gather}
\int_{|t|>1}I_6{\rm d}t\lesssim\int_1^\infty\langle t^{\lambda}v\rangle^{-\rho-1}t^\lambda{\rm d}t=(|v|^{-1-1/\lambda}/\lambda)\int_{|v|}^\infty\tau^{1/\lambda}\langle\tau\rangle^{-\rho-1}{\rm d}\tau=O(|v|^{-\rho-1})\label{lem4_6}
\end{gather}
by $\tau=t^\lambda|v|$, noting that $1/\lambda-\rho-1<-1$. Combining \eqref{lem4_1}, \eqref{lem4_3}, \eqref{lem4_4}, and \eqref{lem4_6} completes the proof.
\end{proof}

We here introduce the Graf-type modified wave operators
\begin{equation}
\Omega_{\lambda,v}^{\pm}=\slim_{t\rightarrow\pm\infty}U(t,0)^*U_{0,\lambda}(t)e^{-{\rm i}\int_0^tV^{\rm reg}(\alpha(\tau)v){\rm d}\tau}.
\end{equation}
More precisely, although the Graf-type was originally formulated in the setting of outer electric fields, we will still refer to it as the Graf-type in this paper. By the assumption \eqref{reg_decay} and $|\sinh\omega t|\geqslant\omega|t|$, we have
\begin{gather}
\int_{-\infty}^\infty|V^{\rm reg}(\alpha(t)v)|{\rm d}t=\int_{|t|\leqslant1}+\int_{|t|>1}|V^{\rm reg}(\alpha(t)v)|{\rm d}t\nonumber\\
\lesssim\int_0^1\langle vt\rangle^{-\rho}{\rm d}t+\int_1^\infty\langle vt^\lambda\rangle^{-\rho}{\rm d}t<\infty,\label{graf}
\end{gather}
noting $\lambda\rho>1$. This implies that
\begin{equation}
\Omega_{\lambda,v}^{\pm}=W_\lambda^\pm e^{-{\rm i}\int_0^{\pm\infty}V^{\rm reg}(\alpha(\tau)v){\rm d}\tau}
\end{equation}
exist. As an alternative candidates for the modified wave operators, \cite{Ni1,Ni2} adopted the Dollard-type for the inverse scattering under the Stark effect and time-periodic electric fields. The Dollard-type is defined using the Fourier multiplier. Later, \cite{AdMa} first adopted the Graf-type under the Stark effect. Since then, the Graf-type modification has been the standard method for inverse scattering. In contrast to the Dollard-type, we emphasize that the Graf-type is scalar-valued.

\begin{lem}\label{lem5}
Let $\Phi_v$ be as in Theorem \ref{thm2}. Then
\begin{equation}
\sup_{t\in\mathbb{R}}\|(U(t,0)\Omega_{\lambda,v}^--U_{0,\lambda}(t)e^{-{\rm i}\int_0^tV^{\rm reg}(\alpha(\tau)v){\rm d}\tau})\Phi_v\|=O(|v|^{-1})
\end{equation}
holds as $|v|\rightarrow\infty$.
\end{lem}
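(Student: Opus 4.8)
The plan is to mirror the proof of Lemma \ref{lem3} almost verbatim, replacing the unmodified operator $W_\lambda^-$ by the Graf-type modified operator $\Omega_{\lambda,v}^-$ and exploiting the fact that the scalar phase factor was introduced precisely to absorb the slowly-decaying part of $V^{\rm reg}$. First, since $U(t,0)$ is unitary I would pull it out of the norm, reducing the claim to a uniform-in-$t$ bound on $\|(\Omega_{\lambda,v}^- - U(t,0)^*U_{0,\lambda}(t)e^{-{\rm i}\int_0^tV^{\rm reg}(\alpha(\tau)v){\rm d}\tau})\Phi_v\|$. Using the definition of $\Omega_{\lambda,v}^-$ as the strong limit of $U(\tau,0)^*U_{0,\lambda}(\tau)e^{-{\rm i}\int_0^\tau V^{\rm reg}(\alpha(s)v){\rm d}s}$ as $\tau\to-\infty$, I would then write this difference as $-\int_{-\infty}^t$ of the $\tau$-derivative of that product, exactly as in the Duhamel step of Lemma \ref{lem3}.

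The key step is the derivative computation. Using $\partial_\tau U(\tau,0)^*={\rm i}U(\tau,0)^*H(\tau)$, $\partial_\tau U_{0,\lambda}(\tau)=-{\rm i}H_0(\tau)U_{0,\lambda}(\tau)$, and $\partial_\tau e^{-{\rm i}\int_0^\tau V^{\rm reg}(\alpha(s)v){\rm d}s}=-{\rm i}V^{\rm reg}(\alpha(\tau)v)e^{-{\rm i}\int_0^\tau V^{\rm reg}(\alpha(s)v){\rm d}s}$, the product rule gives
\begin{align}
&\frac{{\rm d}}{{\rm d}\tau}\left(U(\tau,0)^*U_{0,\lambda}(\tau)e^{-{\rm i}\int_0^\tau V^{\rm reg}(\alpha(s)v){\rm d}s}\right)\nonumber\\
&\quad={\rm i}U(\tau,0)^*\left\{V^{\rm sing}(x)+V^{\rm reg}(x)-V^{\rm reg}(\alpha(\tau)v)\right\}U_{0,\lambda}(\tau)e^{-{\rm i}\int_0^\tau V^{\rm reg}(\alpha(s)v){\rm d}s},
\end{align}
where I have used $H(\tau)-H_0(\tau)=V^{\rm sing}(x)+V^{\rm reg}(x)$ together with the crucial observation that $V^{\rm reg}(\alpha(\tau)v)$ is a scalar, so it commutes through $U_{0,\lambda}(\tau)$ and combines with the $V^{\rm reg}(x)$ coming from the perturbation. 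This is exactly the point of the Graf modification: the singular part $V^{\rm sing}(x)$ is untouched, while $V^{\rm reg}(x)$ is replaced by the difference $V^{\rm reg}(x)-V^{\rm reg}(\alpha(\tau)v)$ along the classical trajectory.

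Finally, since $U(\tau,0)^*$ is unitary and the exponential is a unimodular scalar (as $V^{\rm reg}$ is real-valued), I would bound the norm of the integrand by $\|V^{\rm sing}(x)U_{0,\lambda}(\tau)\Phi_v\|+\|\{V^{\rm reg}(x)-V^{\rm reg}(\alpha(\tau)v)\}U_{0,\lambda}(\tau)\Phi_v\|$ and enlarge the integration domain from $(-\infty,t)$ to all of $\mathbb{R}$, which makes the resulting bound independent of $t$. Lemma \ref{lem1} controls the first term by $O(|v|^{-1})$, and Lemma \ref{lem4} — whose entire purpose was to upgrade the $O(|v|^{-\rho})$ rate of Lemma \ref{lem2} to $O(|v|^{-1})$ — controls the second by the same rate. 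Summing, the supremum over $t$ is $O(|v|^{-1})$, as claimed. I do not expect a genuine obstacle here once Lemmas \ref{lem1} and \ref{lem4} are available; the only points requiring care are the sign bookkeeping in the derivative and the cancellation of the scalar phase against the $V^{\rm reg}(\alpha(\tau)v)$ term, which is precisely the mechanism that makes the Graf-type modification restore the needed decay in the case $\sigma>2$.
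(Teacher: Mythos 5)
Your proposal is correct and follows essentially the same route as the paper: a Duhamel/Cook argument in which the scalar Graf phase contributes the term $-V^{\rm reg}(\alpha(\tau)v)$ that combines with the perturbation $V^{\rm sing}+V^{\rm reg}$, after which the norm of the integrand is bounded (using that the phase is unimodular and commutes through everything) by the two integrals controlled by Lemmas \ref{lem1} and \ref{lem4}. The sign bookkeeping in your derivative computation is also consistent with the propagator relations stated in the paper.
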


\begin{proof}[Proof of Lemma \ref{lem5}]
This proof is originated in \cite[Lemma 2.3]{AdMa} (see also \cite{AdFuIs,AdKaKaTo,AdTs1,AdTs2,Is1,Is3,Is5,VaWe}). For simplicity, we denote
\begin{equation}
U_{{\rm G},\lambda,v}(t)=U_{0,\lambda}(t)e^{-{\rm i}\int_0^tV^{\rm reg}(\alpha(\tau)v){\rm d}\tau}
\end{equation}
 and
 \begin{equation}
 V_v(t,x)=V^{\rm sing}(x)+V^{\rm reg}-V^{\rm reg}(\alpha(t)v)
 \end{equation}
 and have
\begin{gather}
\Omega_{\lambda,v}^--U(t,0)^*U_{{\rm G},\lambda,v}(t)=-\int_{-\infty}^t({\rm d}/{\rm d}\tau)U(\tau,0)^*U_{{\rm G},\lambda,v}(\tau){\rm d}\tau\nonumber\\
=-{\rm i}\int_{-\infty}^tU(\tau,0)^*V_v(\tau,x)U_{{\rm G},\lambda,v}(\tau){\rm d}\tau,
\end{gather}
as in the proof of Lemma \ref{lem3}. We thus have
\begin{gather}
\|(\Omega_{\lambda,v}^--U(t,0)^*U_{{\rm G},\lambda,v}(t))\Phi_v\|\nonumber\\
\leqslant\int_{-\infty}^\infty\|V^{\rm sing}(x)U_{0,\lambda}(t)\Phi_v\|{\rm d}t+\int_{-\infty}^\infty\|\{V^{\rm reg}(x)-V^{\rm reg}(\alpha(t)v)\}U_{0,\lambda}(t)\Phi_v\|{\rm d}t,
\end{gather}
noting that the Graf-type modifier is a scalar. Lemmas \ref{lem1} and \ref{lem4} complete the proof.
\end{proof}

\begin{proof}[Proof of Theorem \ref{thm2} for $\sigma>2$]
Let $I_v=e^{-{\rm i}\int_{-\infty}^\infty V^{\rm reg}(\alpha(t)v){\rm d}t}$. By \eqref{graf}, we have $I_v\rightarrow1$ as $|v|\rightarrow\infty$. Because $\Omega_{\lambda,v}^{\pm}=W_\lambda^\pm e^{-{\rm i}\int_0^{\pm\infty}V^{\rm reg}(\alpha(\tau)v){\rm d}\tau}$, we can write $S_\lambda$ as $S_\lambda=I_v(\Omega_{\lambda,v}^+)^*\Omega_{\lambda,v}^-$ and
\begin{equation}
{\rm i}(S_\lambda-I_v)={\rm i}I_v(\Omega_{\lambda,v}^+-\Omega_{\lambda,v}^-)^*\Omega_{\lambda,v}^-={\rm i}I_v\int_{-\infty}^\infty U_{{\rm G},\lambda,v}(t)^*V_v(t,x)U(t,0)\Omega_{\lambda,v}^-{\rm d}t.\label{thm2_8}
\end{equation}
Representation \eqref{thm2_8} originated in \cite{AdMa}. Noting that $[S_\lambda,p_j]=[S_\lambda-I_v,p_j-v_j]$, we have
\begin{equation}
|v|({\rm i}[S_\lambda,p_j]\Phi_v,\Psi_v)=I_v(I_{\rm G}(v)+R_{\rm G}(v)),
\end{equation}
where
\begin{align}
I_{\rm G}(v)&=|v|\int_{-\infty}^\infty\left\{\right.(V_v(t,x)U_{{\rm G},\lambda,v}(t)(p_j\Phi_0)_v,U_{{\rm G},\lambda,v}(t)\Psi_v)\nonumber\\
&\quad-(V_v(t,x)U_{{\rm G},\lambda,v}(t)\Phi_v,U_{{\rm G},\lambda,v}(t)(p_j\Psi_0)_v)\left.\right\}{\rm d}t,\\
R_{\rm G}(v)&=|v|\int_{-\infty}^\infty\left\{\right.((U(t,0)\Omega_{\lambda,v}^--U_{{\rm G},\lambda,v}(t))(p_j\Phi_0)_v,V(x)U_{0,\lambda}(t)\Psi_v)\nonumber\\
&\quad-((U(t,0)\Omega_{\lambda,v}^--U_{{\rm G},\lambda,v}(t))\Phi_v,V_v(t,x)U_{{\rm G},\lambda,v}(t)(p_j\Psi_0)_v)\left.\right\}{\rm d}t.
\end{align}
By Lemmas \ref{lem1}, \ref{lem4}, and \ref{lem5}, we have
\begin{equation}
R_{\rm G}(v)=O(|v|^{-1})\label{thm2_9}
\end{equation}
as $|v|\rightarrow\infty$. The proof that $I_{\rm G}(v)$ converges to the right-hand side of \eqref{reconstructing_limit} can be demonstrated as in the proof of Theorem \ref{thm2} for $\sigma\leqslant2$. Indeed, $I_{\rm G}(v)=I(v)$ holds because the Graf-type modifier is scalar-valued. By \eqref{reg_decay} with $\sigma>2$, the estimate corresponding to \eqref{thm2_5} is
\begin{equation}
\cosh(\omega\tau/|v|)|({\rm i}(\partial_{x_j}V^{\rm reg})(x)e^{-{\rm i}(\tau/|v|)H_0}\Phi_v,e^{-{\rm i}(\tau/|v|)H_0}\Psi_v)|\lesssim\langle\tau\rangle^{-\rho-1}\label{thm2_10}
\end{equation}  
and \eqref{thm2_7} is
\begin{gather}
|v|\int_{|t|>1}|t|^{1-\lambda}|({\rm i}(\partial_{x_j}V^{\rm reg})(x)U_{0,\lambda}(t)\Phi_v,U_{0,\lambda}(t)\Psi_v)|{\rm d}t\nonumber\\
\lesssim|v|\int_1^\infty\left\{t^{1-\lambda}\langle t^{2\lambda-1}v\rangle^{-2}+t^{1-\lambda}\langle t^\lambda v\rangle^{-\rho-1}\right\}{\rm d}t\nonumber\\
=O(|v|^{-1})+(|v|^{2-2/\lambda}/\lambda)\int_{|v|}^\infty\tau^{2/\lambda-2}\langle\tau\rangle^{-\rho-1}{\rm d}\tau=O(|v|^{-1})+O(|v|^{-\rho}).\label{thm2_11}
\end{gather}
\end{proof}

\begin{proof}[Proof of Theorem \ref{thm1}]
This proof is demonstrated as in \cite[Theorem 1.2]{We} (see also \cite{EnWe}). Let $V=V_1-V_2$, $V^{\rm sing}=V_1^{\rm sing}-V_2^{\rm sing}$ and $V^{\rm reg}=V_1^{\rm reg}-V_2^{\rm reg}$. For $1\leqslant j,k\leqslant n$ with $j\not=k$, we define $y=y_1e_j+y_2e_k\in\mathbb{R}^2$ where $e_j$ and $e_k$ are orthonormal vectors in $\mathbb{R}^n$ and
\begin{equation}
f(y)=(V^{\rm sing}p_j\Phi(y),\Psi(y))-(V^{\rm sing}\Phi(y),p_j\Psi(y))
+({\rm i}(\partial_{x_j}V^{\rm reg})\Phi(y),\Psi(y))
\end{equation}
where $\Phi(y)=e^{-{\rm i}p\cdot y}\Phi$ for $\Phi\in\mathscr{S}(\mathbb{R}^n)$ with $\mathscr{F}\Phi\in C_0^\infty(\mathbb{R}^n)$ and $\Psi(y)$ has same properties. By the assumption \eqref{reg_decay} and the fact that $V^{\rm sing}$ satisfies the Enss condition
\begin{equation}
\int_0^\infty\|V^{\rm sing}(x)\langle p\rangle^{-2}F(|x|\geqslant R)\|{\rm d}R<\infty\label{enss_condition}
\end{equation}
(see \eqref{lem1_6}), we find that $f\in L^2(\mathbb{R}^2)$ and that $f(y)$ is continuous and bounded. Therefore, by Theorem \ref{thm2}, the Radon transform of $f$ satisfies
\begin{equation}
\mathscr{R}f(\hat{v},y)=\int_{-\infty}^\infty f(y+\hat{v}t){\rm d}t\equiv0
\end{equation}
for any $\hat{v}$ in the plane spanned by $e_j$ and $e_k$, and we have $f(y)\equiv0$ by the Plancherel formula of the Radon transform \cite[Theorem 2.17 in Chapter 1]{He}. The identity $\partial_{y_1}(V\Phi(y),\Psi(y))=-{\rm i}f(y)\equiv0$ implies that $(V\Phi(y),\Psi(y))$ does not depend on $y_1$. By \eqref{reg_decay} and \eqref{enss_condition} again, we have
\begin{equation}
(V\Phi,\Psi)=\lim_{|y_1|\rightarrow\infty}(V\Phi(y_1,0),\Psi(y_1,0))=0.
\end{equation}
We conclude that $V\langle p\rangle^{-2}=0$ as an operator and that $V(x)=0$ a.e. $x\in\mathbb{R}^n$.
\end{proof}

\noindent\textbf{Acknowledgments.} 
This work was supported by JSPS KAKENHI Grant Numbers JP20K03625 and JP21K03279. Part of this work was done while the author was visiting the Alfr\'ed R\'enyi Institute of Mathematics. The author thanks Haruya Mizutani of Osaka University and referees for their valuable comments.

\end{document}